\newcommand{\Rmnum}[1]{\expandafter\@slowromancap\romannumeral #1@}
\let\@fnsymbol\@arabic
\begin{document}
\newtheorem{theorem}{Theorem}[section]
\newtheorem{observation}[theorem]{Observation}
\newtheorem{corollary}[theorem]{Corollary}
\newtheorem{algorithm}[theorem]{Algorithm}
\newtheorem{problem}[theorem]{Problem}
\newtheorem{question}[theorem]{Question}
\newtheorem{lemma}[theorem]{Lemma}
\newtheorem{proposition}[theorem]{Proposition}

\newtheorem{definition}[theorem]{Definition}
\newtheorem{guess}[theorem]{Conjecture}
\newtheorem{claim}[theorem]{Claim}
\newtheorem{example}[theorem]{Example}
\makeatletter
  \newcommand\figcaption{\def\@captype{figure}\caption}
  \newcommand\tabcaption{\def\@captype{table}\caption}
\makeatother

\newtheorem{acknowledgement}[theorem]{Acknowledgement}

\newtheorem{axiom}[theorem]{Axiom}
\newtheorem{case}[theorem]{Case}
\newtheorem{conclusion}[theorem]{Conclusion}
\newtheorem{condition}[theorem]{Condition}
\newtheorem{conjecture}[theorem]{Conjecture}
\newtheorem{criterion}[theorem]{Criterion}
\newtheorem{exercise}[theorem]{Exercise}
\newtheorem{notation}[theorem]{Notation}
\newtheorem{solution}[theorem]{Solution}
\newtheorem{summary}[theorem]{Summary}
\newtheorem{fact}[theorem]{Fact}

\newcommand{\pp}{{\it p.}}
\newcommand{\de}{\em}
\newcommand{\mad}{\rm mad}

\newcommand*{\QEDA}{\hfill\ensuremath{\blacksquare}}
\newcommand*{\QEDB}{\hfill\ensuremath{\square}}

\newcommand{\qf}{Q({\cal F},s)}
\newcommand{\qff}{Q({\cal F}',s)}
\newcommand{\qfff}{Q({\cal F}'',s)}
\newcommand{\f}{{\cal F}}
\newcommand{\ff}{{\cal F}'}
\newcommand{\fff}{{\cal F}''}
\newcommand{\fs}{{\cal F},s}
\newcommand{\g}{\gamma}
\newcommand{\wrt}{with respect to }

\title{Monochromatic disconnection of graphs\footnote{Supported by NSFC No.11871034, 11531011 and NSFQH No.2017-ZJ-790.}}

\renewcommand{\thefootnote}{\arabic{footnote}}

\author{\small Ping Li$^1$, ~ Xueliang Li$^{1,2}$\\
\small $^1$Center for Combinatorics and LPMC\\
\small  Nankai University\\
\small Tianjin 300071, China\\
\small Email: qdli\underline{ }ping@163.com, ~ lxl@nankai.edu.cn\\
\small $^2$School of Mathematics and Statistics\\
\small Qinghai Normal University\\
\small Xining, Qinghai 810008, China\\
}

\date{}
\maketitle

\begin{abstract}
For an edge-colored graph $G$, we call an edge-cut $M$ of $G$ monochromatic if the edges of $M$ are colored with a same color.
The graph $G$ is called monochromatically disconnected if any two distinct vertices of $G$ are separated by a monochromatic edge-cut.
For a connected graph $G$, the monochromatic disconnection number, denoted by $md(G)$, of $G$ is the maximum number of colors
that are needed in order to make $G$ monochromatically disconnected. We will show that almost all graphs have monochromatic
disconnection numbers equal to 1. We also obtain the Nordhaus-Gaddum-type results for $md(G)$.\\[2mm]
{\bf Keywords:} monochromatic edge-cut, monochromatic disconnection number, Nordhaus-Gaddum-type results.\\[2mm]
{\bf AMS subject classification (2010)}: 05C15, 05C40, 05C35.
\end{abstract}

\baselineskip16pt

\section{Introduction}
Let $G$ be a graph and let $V(G)$, $E(G)$ denote the vertex set and the edge set of $G$, respectively.
Let $|G|$ (also $v(G)$) denote the number of vertices of $G$, called the order of $G$. If there is no confusion,
we use $n$ and $m$ to denote, respectively, the number of vertices and the number of edges of
a graph, throughout this paper.
For $v\in V(G)$, let $d_G(v)$ denote the degree of $v$. We call a vertex $v$ a {\em $t$-degree} vertex of $G$ if $d_G(v)=t$.
Let $\delta(G)$ and $\Delta(G)$ denote the minimum and maximum degree of $G$, respectively. Sometimes,
we also use $\Delta$ to denote a triangle. We use $\overline{G}$ to denote the complement graph of $G$.
Let $S$ and $F$ be a vertex set and an edge set of $G$, respectively. $G-S$ is a graph obtained from $G$
by deleting the vertices of $S$ together with the edges incident with vertices of $S$. $G-F$ is a graph whose
vertex set is $V(G)$ and edge set is $E(G)-F$. Let $G[S]$ and $G[F]$ be the vertex-induced and edge-induced subgraphs of $G$,
respectively, by $S$ and $F$. The distance of $u,v$ in $G$ is denoted by $d_G(u,v)$.
For all other terminology and notation not defined here we follow Bondy and Murty \cite{B}.

Throughout this paper, we use $K_n, K_{n_1,n_2}$ and $C_n$ to denote a complete graph, a complete  bipartite graph, a cycle of order $n$, respectively.
Let $K_n^-$ be the graph obtained from $K_n$ by deleting an arbitrary edge. $K_3$ is also called a {\em triangle}. We call a cycle $C$ a {\em $t$-cycle}
if $|C|=t$. We use $[r]$ to denote the set $\{1,2,\cdots,r\}$ of positive integers.

For a graph $G$, let $\Gamma: E(G)\rightarrow [r]$ be an {\em edge-coloring} of $G$ that allows a
same color to be assigned to adjacent edges. For an edge $e$ of $G$, we use $\Gamma(e)$ to denote the color of $e$.
If $H$ is a subgraph of $G$, we also use $\Gamma(H)$ to denote the set of colors on edges of $H$ and use $|\Gamma(H)|$
to denote the number of colors in $\Gamma(H)$. An edge-coloring $\Gamma$ of $G$ is {\em trivial} if $|\Gamma(G)|=1$,
otherwise, it is {\em nontrivial}.

For two vertices $u$ and $v$ of an edge-colored graph $G$, a {\em rainbow $uv$-path} is a path of $G$ between $u$ and $v$ such that the edges
on the path are colored pairwise differently, and $G$ is {\em rainbow connected} if any two distinct vertices of $G$ are connected by a rainbow path.
An edge-coloring $\Gamma$ of $G$ is a {\em rainbow connection coloring} if it makes $G$ rainbow connected. For a connected graph $G$, the {\em rainbow connection number}
of $G$, denoted by $rc(G)$, is the minimum number of colors that are needed in order to make $G$ rainbow connected.
The notion rainbow connection coloring was introduced by Chartrand et al. in \cite{CJMZ}.

An {\em edge-cut} of a connected graph $G$ is an edge set $F$ such that $G-F$ is not connected. For an edge-colored graph $G$,
we call an edge-cut $R$ a {\em rainbow edge-cut} if the edges of $R$ are colored pairwise different.
For two vertices $u,v$ of $G$, a {\em rainbow $uv$-cut} is a rainbow edge-cut that separates $u$ and $v$.
An edge-colored graph $G$ is {\em rainbow disconnected} if any two vertices of $G$ has a rainbow cut separating them.
An edge-coloring of $G$ is a {\em rainbow disconnection coloring} if it makes $G$ rainbow disconnected.
For a connected graph $G$, the {\em rainbow disconnection number} of $G$, denoted by $rd(G)$, is the minimum number of colors that are needed in
order to make $G$ rainbow disconnected. The notion rainbow connection coloring was introduced by Chartrand et al. in \cite{CDHHZ}.

Contrary to the concepts for rainbow connection and disconnection, monochromatic versions of these concepts naturally appeared, as the other extremal.
For two vertices $u$ and $v$ of an edge-colored graph $G$, a {\em monochromatic $uv$-path} is a $uv$-path of $G$ whose edges are colored with a same color,
and $G$ is {\em monochromatically connected} if any two distinct vertices of $G$ are connected by a monochromatic path.
An edge-coloring $\Gamma$ of $G$ is a {\em monochromatic connection coloring} if it makes $G$ monochromatically connected.
For a connected graph $G$, the {\em monochromatic connection number} of $G$, denoted by $mc(G)$, is the maximum number of colors that are needed in
order to make $G$ monochromatically connected. The notion monochromatic connection coloring was introduced by Caro and Yuster in \cite{CY}.

As a counterpart of the rainbow disconnection coloring and a similar object of the monochromatic connection coloring,
we now introduce the notion of monochromatic disconnection coloring of a graph. For an edge-colored graph $G$, we call an edge-cut $M$
a {\em monochromatic edge-cut} if the edges of $M$ are colored with a same color. For two vertices $u,v$ of $G$, a {\em monochromatic $uv$-cut}
is a monochromatic edge-cut that separates $u$ and $v$. An edge-colored graph $G$ is {\em monochromatically disconnected} if any two vertices of $G$
has a monochromatic cut separating them. An edge-coloring of $G$ is a {\em monochromatic disconnection coloring} ($MD$-coloring for short)
if it makes $G$ monochromatically disconnected. For a connected graph $G$, the {\em monochromatic disconnection number} of $G$, denoted by $md(G)$,
is the maximum number of colors that are needed in order to make $G$ monochromatically disconnected. An {\em extremal MD-coloring} of $G$ is an $MD$-coloring
that uses $md(G)$ colors. If $H$ is a subgraph of $G$ and $\Gamma$ is an edge-coloring of $G$, we call $\Gamma$ an edge-coloring {\em restricted} on $H$.

As we know that there are two ways to study the connectivity of a graph, one way is by using paths and the other is by using cuts.
Both rainbow connection and monochromatic connection provide ways to study the colored connectivity of graph by colored paths. However, both rainbow disconnection
and monochromatic disconnection can provide ways to study the colored connectivity of graph by colored cuts. All these parameters or numbers coming from studying
the colored connectivity of a graph should be regarded as some kinds of chromatic numbers. However, they are different from classic chromatic numbers. These kinds of chromatic numbers
come from colorings by keeping some global structural properties of a graph, say connectivity; whereas the classic chromatic numbers come from colorings by keeping
some local structural properties of a graph, say adjacent vertices or edges. So, the employed methods to study them appear quite different sometimes. Of course,
local structural properties may yield global structural properties, and vice versa. But this is not always the case, say, local connectedness of a graph cannot guarantee
connectedness of the entire graph. So, many colored versions of connectivity parameters appeared in recent years, and we refer \cite{rainbow1, rainbow2, rainbow3, proper1, proper2, mono1, mono2} for
surveys.

Let $G$ be a graph that may have parallel edges but no loops. By deleting all parallel edges but one of them, we obtain a simple spanning subgraph of $G$, and call it the {\em underling graph} of $G$. If there are some parallel edges of an edge $e=ab$, then any monochromatic $ab$-cut contains $e$ and its parallel edges. Therefore, the following result is obvious, which means that we only need to think about simple graphs in
the sequel.
\begin{proposition}
Let $G'$ be the underling graph of a graph $G$. Then $md(G)=md(G')$.
\end{proposition}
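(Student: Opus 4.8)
The plan is to establish the two inequalities $md(G)\ge md(G')$ and $md(G)\le md(G')$ separately, in both cases exploiting the fact recorded just above the statement: every monochromatic $ab$-cut of $G$ contains the edge $ab$ together with all edges parallel to it. The first thing I would extract from this is the following remark: in \emph{any} $MD$-coloring $\Gamma$ of $G$, all edges of a parallel class (all edges joining a fixed pair $a,b$) receive the same color. Indeed, since $\Gamma$ makes $G$ monochromatically disconnected, some monochromatic $ab$-cut $M$ exists; by the observation $M$ contains the entire parallel class of $ab$, and monochromaticity of $M$ then forces those edges to share a color. A consequence I would use is that the restriction $\Gamma'=\Gamma|_{G'}$ uses exactly the same set of colors as $\Gamma$, so $|\Gamma'(G')|=|\Gamma(G)|$.

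For $md(G)\le md(G')$, I would take an extremal $MD$-coloring $\Gamma$ of $G$ and show that $\Gamma'=\Gamma|_{G'}$ is an $MD$-coloring of $G'$. Given $u,v\in V(G')=V(G)$, pick a monochromatic $uv$-cut $M$ of $(G,\Gamma)$; as just noted $M$ is a union of full parallel classes, so $M\cap E(G')$ is precisely the set of their representative edges and is still monochromatic. It separates $u$ and $v$ in $G'$: a $uv$-path of $G'-(M\cap E(G'))$ would also be a $uv$-path of $G$ avoiding $M$ (the only edges of $G'$ lying in $M$ are those in $M\cap E(G')$), contradicting that $M$ is a $uv$-cut of $G$. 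Hence $\Gamma'$ is an $MD$-coloring of $G'$ using $|\Gamma(G)|=md(G)$ colors, so $md(G')\ge md(G)$.

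For $md(G)\ge md(G')$, I would go the other way: take an extremal $MD$-coloring $\Gamma'$ of $G'$ and extend it to $G$ by assigning to every parallel copy of an edge $e$ the color $\Gamma'(e)$, obtaining a coloring $\Gamma$ with $|\Gamma(G)|=|\Gamma'(G')|=md(G')$. For any $u,v$, take a monochromatic $uv$-cut $M'$ of $(G',\Gamma')$ and let $M$ consist of $M'$ together with all parallel copies in $G$ of its edges; by construction $M$ is monochromatic. To see $M$ separates $u$ and $v$ in $G$, I would project any putative $uv$-path of $G-M$ to $G'$, replacing each edge by the representative of its parallel class; this yields a $uv$-walk in $G'$ using no edge of $M'$, contradicting that $M'$ is a $uv$-cut of $G'$. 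So $\Gamma$ is an $MD$-coloring of $G$ and $md(G)\ge md(G')$.

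Putting the two inequalities together gives $md(G)=md(G')$. I do not expect a genuine obstacle here; the observation preceding the statement does the real work. The only places needing a little care are bookkeeping ones: checking that a \emph{walk} (not necessarily a path) in $G'$ still certifies that two vertices lie in the same component, and checking that deleting a complete parallel class in $G$ corresponds exactly to deleting a single representative edge in $G'$, so that the cut property is transported faithfully in both directions.
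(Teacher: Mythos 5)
Your argument is correct and is essentially the paper's: the paper dismisses the statement as obvious precisely because any monochromatic $ab$-cut must contain the whole parallel class of $ab$, which is the observation your two inequalities spell out via restricting and extending colorings. One tiny imprecision: a monochromatic $uv$-cut $M$ of $G$ need not be a union of full parallel classes (an edge of $M$ whose parallel copy survives is merely redundant), but your separation argument never actually uses that claim, only that $M\cap E(G')$ is monochromatic and meets every $uv$-path of $G'$, so nothing breaks.
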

The following result means that we only need to consider connected graphs in the sequel.
\begin{proposition}
If a simple graph $G$ has $t$ components $D_1,\cdots,D_t$, then $md(G)=\sum_{i\in[t]}md(D_i)$.
\end{proposition}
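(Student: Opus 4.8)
The plan is to prove the two inequalities $md(G)\ge\sum_{i\in[t]}md(D_i)$ and $md(G)\le\sum_{i\in[t]}md(D_i)$ separately, the key observation in both directions being that whether a pair $u,v$ of vertices lying in a common component $D_i$ admits a monochromatic $uv$-cut depends only on the coloring restricted to $D_i$: edges lying in components other than $D_i$ are irrelevant to separating $u$ from $v$. A pair of vertices lying in different components is moreover separated by the empty edge-cut, which is vacuously monochromatic, so such pairs impose no constraint on the coloring.

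For the lower bound, I would first fix for each $i$ an extremal $MD$-coloring $\Gamma_i$ of $D_i$ using $md(D_i)$ colors, and relabel colors so that the palettes $\Gamma_i(D_i)$ are pairwise disjoint. Let $\Gamma$ be the edge-coloring of $G$ whose restriction to each $D_i$ is $\Gamma_i$; it uses $\sum_{i\in[t]}md(D_i)$ colors. To see that $\Gamma$ is an $MD$-coloring: if $u,v$ lie in the same $D_i$, then any monochromatic $uv$-cut of $D_i$ under $\Gamma_i$ is also a monochromatic $uv$-cut of $G$ under $\Gamma$, since deleting those edges leaves $u$ and $v$ in distinct components of $G$; if $u,v$ lie in different components, the empty set already separates them. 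Hence $md(G)\ge\sum_{i\in[t]}md(D_i)$.

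For the upper bound, take any $MD$-coloring $\Gamma$ of $G$ with $md(G)$ colors. For each $i$ and each pair $u,v\in V(D_i)$, choose a monochromatic $uv$-cut $M$ of $G$; then $M\cap E(D_i)$ is again a $uv$-cut, now of $D_i$, and it is still monochromatic, so the restriction of $\Gamma$ to $D_i$ is an $MD$-coloring of $D_i$ and therefore uses at most $md(D_i)$ colors. Since every edge of $G$ lies in some $D_i$, we have $\Gamma(G)=\bigcup_{i\in[t]}\Gamma(D_i)$, and thus $md(G)=|\Gamma(G)|\le\sum_{i\in[t]}|\Gamma(D_i)|\le\sum_{i\in[t]}md(D_i)$, which completes the argument.

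I do not expect a genuine obstacle here; the only point needing care is to make the notions of monochromatic $uv$-cut and $MD$-coloring behave sensibly for a disconnected graph (equivalently, to observe that $md$ extends to disconnected graphs through this componentwise interpretation), and to be explicit that intersecting a $uv$-cut with the edge set of the component containing $u$ and $v$ preserves both the separating property and monochromaticity.
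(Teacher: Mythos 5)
Your proof is correct and is exactly the componentwise argument the paper treats as obvious (the proposition is stated there without proof): disjoint palettes on the components give the lower bound, and restriction of any $MD$-coloring to each component gives the upper bound. Your added care about cross-component pairs and about intersecting a cut with $E(D_i)$ is precisely the routine verification the authors omitted.
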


Let $G$ and $H$ be two graphs. The {\em union} of $G$ and $H$ is the graph $G\cup H$ with vertex set $V(G)\cup V(H)$ and edge set $E(G)\cup E(H)$. If $G$ and $H$ are vertex-disjoint, then let $G\vee H$ denote the {\em join} of $G$ and $H$, which is obtained from $G$ and $H$ by adding an edge between each vertex of $G$ and every vertex of $H$.

A block is {\em trivial} if it is a cut-edge. If $e=uv$ is an edge of $G$ with $d_G(v)=1$, we call $e$ a {\em pendent edge} of $G$ and $v$ a {\em pendent vertex} of $G$.

\section{Some basic results}

Let $G$ be a graph having at least two blocks. An edge-coloring of $G$ is an $MD$-coloring if and only if it is also an $MD$-coloring restricted on each block. Therefore, the following result is obvious.
\begin{proposition}\label{block}
If a connected graph $G$ has $r$ blocks $B_1,\cdots,B_r$, then $md(G)=\sum_{i\in [r]}md(B_i)$.
\end{proposition}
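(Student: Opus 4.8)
The plan is to prove both directions by relating monochromatic edge-cuts of $G$ to monochromatic edge-cuts of its blocks. Recall the standard facts from block theory: for any two distinct vertices $u,v$ of a connected graph $G$, there is a unique path in the block-cut tree of $G$ from the block(s) containing $u$ to the block(s) containing $v$, and every $uv$-path in $G$ passes through a well-defined sequence of blocks $B_{i_1},\dots,B_{i_k}$ and cut-vertices. In particular, if $u$ and $v$ lie in a common block $B$, then every minimal $uv$-cut of $G$ is entirely contained in $E(B)$; and if they do not lie in a common block, then there is a cut-vertex $w$ on every $uv$-path, so any $uv$-cut inside a single block on the path separates $u$ from $v$ in $G$ as well.

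First I would prove the inequality $md(G)\le\sum_{i\in[r]}md(B_i)$. Given an extremal $MD$-coloring $\Gamma$ of $G$, I claim that for each block $B_i$, the restriction $\Gamma|_{B_i}$ is an $MD$-coloring of $B_i$. Indeed, take two distinct vertices $u,v\in V(B_i)$; since $B_i$ is 2-connected (or a single edge), $u$ and $v$ lie in the common block $B_i$, so a monochromatic $uv$-cut $M$ of $G$ exists, and by the block-theory fact above a minimal such cut lies within $E(B_i)$, hence is a monochromatic $uv$-cut of $B_i$. Therefore $|\Gamma(B_i)|\le md(B_i)$. Summing over the blocks and using that $E(G)=\bigcup_i E(B_i)$ is a disjoint union (each edge lies in exactly one block), we get $md(G)=|\Gamma(G)|\le\sum_i|\Gamma(B_i)|\le\sum_i md(B_i)$.

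Next I would prove $md(G)\ge\sum_{i\in[r]}md(B_i)$ by assembling a global coloring from extremal $MD$-colorings of the blocks. Take an extremal $MD$-coloring $\Gamma_i$ of each $B_i$, using pairwise disjoint color sets, and define $\Gamma$ on $E(G)$ by $\Gamma|_{E(B_i)}=\Gamma_i$; this is well-defined since the edge sets are disjoint, and it uses exactly $\sum_i md(B_i)$ colors. It remains to check $\Gamma$ is an $MD$-coloring of $G$. For $u,v$ in a common block $B_i$, a monochromatic $uv$-cut of $B_i$ is also a monochromatic $uv$-cut of $G$ (adding back the rest of $G$ cannot reconnect the two sides, since all connections route through cut-vertices of $B_i$). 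For $u,v$ not in a common block, pick any block $B_j$ on the block-path between them and a cut-vertex $w$ of $B_j$ separating $u$ from $v$; applying $\Gamma_j$'s guarantee to $w$ and a suitable neighbor, or more directly taking the $\Gamma_j$-monochromatic cut separating the two ``sides'' of $w$ within $B_j$, yields a monochromatic edge-cut of $G$ separating $u$ and $v$. Combining the two inequalities gives the proposition.

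The main obstacle is the careful verification in the second direction that a monochromatic cut found inside one block genuinely separates $u$ and $v$ in the whole graph $G$ — one must argue cleanly that in the block-cut tree every $uv$-path is forced through the chosen cut-vertex, so deleting the block-internal cut indeed disconnects $u$ from $v$ globally. This is where the structure of the block-cut tree does the real work; the rest is bookkeeping with disjoint color sets and the fact that distinct blocks share no edges. I would state the needed block-tree separation facts as a short preliminary observation (or cite Bondy--Murty) to keep the argument clean.
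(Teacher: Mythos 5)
Your proof is correct and takes essentially the same approach as the paper, which simply observes that an edge-coloring of $G$ is an $MD$-coloring if and only if its restriction to each block is an $MD$-coloring of that block, and then treats the proposition as immediate from the edge-disjointness of the blocks. Your two inequalities are exactly the two directions of that equivalence, with the block-cut-tree separation facts spelled out in detail.
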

By above proposition, if $G$ is a tree, then $md(G)=n-1$.
\begin{proposition}\label{P1}
If $G$ is a cycle, then $md(G)=\lfloor\frac{|G|}{2}\rfloor$. Furthermore,
if $G$ is a unicycle graph with cycle $C$, then $md(G)=n-\lceil\frac{|C|}{2}\rceil$.
\end{proposition}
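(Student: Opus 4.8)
The plan is to prove the two assertions separately, first establishing the cycle case and then bootstrapping to the unicyclic case.

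For the cycle $C_n$, I would argue both directions. For the upper bound $md(C_n)\le\lfloor n/2\rfloor$, observe that every monochromatic edge-cut of a cycle must consist of at least two edges of the same color (a single edge is never a cut of a $2$-edge-connected graph), and in fact any edge-cut of $C_n$ is a set of an even number of edges; separating two fixed vertices already requires a monochromatic cut, so every color class that is actually used to separate some pair contributes at least two edges. More carefully, I would show that in any $MD$-coloring each color appears on at least two edges: if some color $c$ appeared on a single edge $e$, then $e$ alone is not a cut, and no monochromatic cut can use color $c$, so $c$ is useless and could be merged, but one must check this does not reduce the count below what a valid coloring forces — actually the cleaner route is: since any edge-cut of $C_n$ has even size $\ge 2$, and a monochromatic $uv$-cut for the two neighbours of a degree-$2$ vertex consists of exactly the two edges at that vertex, we need those two edges equicolored for every vertex. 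Hence if $v_1v_2\cdots v_n v_1$ is the cycle, the two edges at $v_i$ share a color for each $i$; chasing this around the cycle, consecutive edges are forced into colors in an alternating-type pattern that yields at most $\lfloor n/2\rfloor$ colors. For the matching lower bound, colour the edges $e_1=v_1v_2, e_2=v_2v_3,\ldots$ so that $e_{2j-1}$ and $e_{2j}$ get colour $j$ (with the last edge handled according to the parity of $n$): then for any two vertices $u,v$, one of the two arcs between them contains a full colour class $\{e_{2j-1},e_{2j}\}$ whose removal separates $u$ from $v$, so this is an $MD$-coloring with $\lfloor n/2\rfloor$ colours.

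For the unicyclic graph $G$ with unique cycle $C$, I would invoke Proposition~\ref{block}: the blocks of $G$ are the cycle $C$ together with $m-|C|$ trivial blocks (cut-edges), where $m=|E(G)|=n$ since $G$ has exactly one cycle. Each trivial block is a $K_2$ with $md(K_2)=1$, and $md(C)=\lfloor|C|/2\rfloor$ by the first part. Summing, $md(G)=\lfloor|C|/2\rfloor+(n-|C|)=n-(|C|-\lfloor|C|/2\rfloor)=n-\lceil|C|/2\rceil$, which is the claimed formula.

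The main obstacle is the upper bound $md(C_n)\le\lfloor n/2\rfloor$: one must argue rigorously that an $MD$-coloring of the cycle cannot "waste" colours, i.e. that the constraint "for every pair of adjacent vertices, the two edges at their common neighbour must be monochromatic" (which comes from the fact that the only edge-cut of size $2$ separating them is that pair) propagates around the whole cycle and caps the number of colours. Handling the parity of $n$ correctly in this propagation — and making sure the degenerate small cases ($n=3$) are consistent — is where care is needed; the lower bound and the unicyclic reduction are routine once the cycle bound is in hand.
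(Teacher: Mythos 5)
Both halves of your cycle argument contain genuine errors; only the unicyclic reduction via Proposition~\ref{block} is correct, and that part coincides with the paper. For the lower bound, the coloring that gives color $j$ to the \emph{consecutive} pair $e_{2j-1},e_{2j}$ is not an $MD$-coloring: to separate two vertices $u,v$ of a cycle, a cut must contain at least one edge from \emph{each} of the two $uv$-arcs (deleting edges from only one arc leaves the other arc, hence a $uv$-path, intact), so a color class lying entirely inside one arc never separates $u$ from $v$, contrary to what you assert. Concretely, in $C_4$ with $e_1,e_2$ colored $1$ and $e_3,e_4$ colored $2$, the pair $v_1,v_3$ has no monochromatic cut; in $C_5$ the analogous failure occurs no matter how the fifth edge is colored. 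The construction that works (and is the one in the paper) pairs edges that are $\lfloor n/2\rfloor$ apart, i.e.\ roughly antipodal edges, so that for every pair $u,v$ some color class has one edge on each arc.

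For the upper bound, your key claim --- that the monochromatic cut separating the two neighbours $v_{i-1},v_{i+1}$ of a vertex $v_i$ must be exactly the two edges at $v_i$, forcing consecutive edges to share a color --- is false: such a cut only needs one edge from each arc, and the second edge may lie anywhere on the long arc. The alternating $2$-coloring of $C_4$ is a valid $MD$-coloring in which no two consecutive edges share a color. Moreover, if your claim were true, chasing it around the cycle would force \emph{all} edges to get one color, i.e.\ $md(C_n)\le 1$, contradicting the very bound $\lfloor n/2\rfloor$ you aim to prove, so the argument is also internally inconsistent. (Your parenthetical that every edge-cut of $C_n$ has even size is likewise false, since edge-cuts need not be minimal, though this is incidental.) The paper's upper bound is much simpler: if an $MD$-coloring used more than $\lfloor n/2\rfloor$ colors, some color would appear on exactly one edge $e=ab$; any $ab$-cut must contain $e$ (as $e$ is itself an $ab$-path) together with at least one edge of the path $C-e$, so it cannot be monochromatic, a contradiction.
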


\begin{proof}
By Proposition \ref{block}, we only prove that $md(G)=\lfloor\frac{|G|}{2}\rfloor$ if $G$ is a cycle.

Let $G=C$ be a cycle. Suppose $C=v_1e_1v_2e_2\cdots v_{n-1}e_{n-1}v_ne_nv_1$. Let $r=\lfloor\frac{n}{2}\rfloor$. For $i\in[r]$ and $j\in[n]$, if $j\equiv i \pmod {r}$, then color $e_j$ by $i+1$. It is easy to verify that the edge-coloring of $C$ is an $MD$-coloring, and so $md(C)\geq r$.

Now we prove $md(C)\leq r$. If $md(C)\geq r+1$, there is an $MD$-coloring $\Gamma$ of $C$ such that $|\Gamma(C)|\geq r+1$. Then there exists a color $i$ of $\Gamma$ that colors only one edge $e$ of $C$, say $e=ab$. Because the monochromatic $ab$-cut must contain $e$ and some other edges of $C-e$, a contradiction.
\end{proof}

Because an $MD$-coloring of $G$ separates any two vertices by a monochromatic cut, it also separates any two vertices of a subgraph of $G$. So the following result is obvious.

\begin{proposition} \label{key}
Let $D$ be a subgraph of a graph $G$. If $\Gamma$ is an $MD$-coloring of $G$, then $\Gamma$ is also an $MD$-coloring restricted on $D$.
\end{proposition}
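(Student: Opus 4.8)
The statement follows at once from the fact that every path of $D$ between two vertices is also a path of $G$, so the plan is short. Fix two distinct vertices $u,v\in V(D)$; it suffices to exhibit a monochromatic $uv$-cut of $D$ under the coloring $\Gamma$ restricted to $D$.

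First I would use the hypothesis that $\Gamma$ is an $MD$-coloring of $G$ to obtain a monochromatic $uv$-cut $M$ of $G$, all of whose edges carry a single color, say $c$. I would then set $M'=M\cap E(D)$. Since $M'\subseteq M$, every edge of $M'$ has color $c$, so $M'$ is monochromatic; the only thing that remains is to verify that $M'$ is an edge-cut of $D$ that separates $u$ and $v$.

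For that, take any $uv$-path $P$ in $D$. Then $P$ is also a $uv$-path in $G$, and since $M$ is a $uv$-cut of $G$, $P$ must contain at least one edge $e\in M$. As $e\in E(P)\subseteq E(D)$, we have $e\in M\cap E(D)=M'$, so $P$ does not survive in $D-M'$. Hence no $uv$-path exists in $D-M'$, i.e. $u$ and $v$ lie in different components of $D-M'$, and therefore $M'$ is a monochromatic $uv$-cut of $D$. (If $u$ and $v$ already lie in distinct components of $D$ there is nothing to prove, and if $D$ is connected then $M'\neq\emptyset$ and $D-M'$ is genuinely disconnected, so $M'$ is indeed an edge-cut in the sense of the definition.) Since $u$ and $v$ were arbitrary vertices of $D$, the restricted coloring is an $MD$-coloring of $D$.

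I do not expect any real obstacle here: the entire content is the elementary observation that restricting an edge-cut of $G$ to the edge set of a subgraph $D$ still separates every pair of vertices of $D$ that it separated in $G$, while the property of being monochromatic is inherited automatically upon passing to a subset of the edges.
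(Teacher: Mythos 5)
Your proof is correct and is exactly the argument the paper has in mind: the paper dismisses the proposition as obvious with the one-line remark that a monochromatic cut separating two vertices of $G$ also separates them in any subgraph, and your restriction $M'=M\cap E(D)$ simply makes that remark precise. No issues.
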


\begin{lemma}\label{subb}
Let $H$ be the union of graphs $H_1,\cdots,H_r$. If $\bigcap_{i\in[r]}E(H_i)\neq \emptyset$ and $md(H_i)=1$ for each $i\in[r]$, then $md(H)=1$.
\end{lemma}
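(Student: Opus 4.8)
The plan is to argue by contradiction, exploiting that the hypothesis $md(H_i)=1$ constrains \emph{every} $MD$-coloring of $H_i$, not merely that it guarantees the trivial one. Indeed, since $md(H_i)$ is the \emph{maximum} number of colors over all $MD$-colorings of $H_i$, the equality $md(H_i)=1$ forces every $MD$-coloring of $H_i$ to use exactly one color (at least one, since $H_i$ has an edge; at most one, by maximality). Note also that each $H_i$ is connected here: $md$ is defined only for connected graphs, and in any case $md$ of a disconnected graph equals the sum of the $md$-values of its components with each summand at least $1$, so a graph with $md$ equal to $1$ must be connected.

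First I would fix an edge $e=ab\in\bigcap_{i\in[r]}E(H_i)$, which exists by hypothesis. Every $H_i$ then contains both $a$ and $b$, so the union $H=\bigcup_{i\in[r]}H_i$ is connected, $md(H)$ is well defined, and $md(H)\ge 1$. Now suppose toward a contradiction that $md(H)\ge 2$, and let $\Gamma$ be an $MD$-coloring of $H$ with $|\Gamma(H)|\ge 2$. By Proposition~\ref{key}, the restriction of $\Gamma$ to each $H_i$ is an $MD$-coloring of $H_i$; by the first paragraph this restriction uses exactly one color, and since $e\in E(H_i)$ that color must be $\Gamma(e)$. Hence every edge of every $H_i$ is colored $\Gamma(e)$, and since $E(H)=\bigcup_{i\in[r]}E(H_i)$, the coloring $\Gamma$ uses only the single color $\Gamma(e)$ on all of $H$, contradicting $|\Gamma(H)|\ge 2$. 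Therefore $md(H)=1$.

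The whole difficulty is conceptual rather than computational: the key realizations are that $md(H_i)=1$ rules out all nontrivial $MD$-colorings of $H_i$, that Proposition~\ref{key} lets one push a putative $MD$-coloring of $H$ down to each $H_i$, and that the common edge $e$ forces all these monochromatic restrictions to agree on one color. I do not expect any genuine obstacle beyond checking that $H$ is connected so that $md(H)$ makes sense, which is immediate from the shared edge.
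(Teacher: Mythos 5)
Your proposal is correct and follows essentially the same route as the paper: by contradiction, restrict a putative $MD$-coloring of $H$ with at least two colors to each $H_i$ (Proposition~\ref{key}), use $md(H_i)=1$ to force each $H_i$ to be monochromatic, and let the common edge force all these colors to agree. The only cosmetic difference is that you anchor the argument on a single shared edge for all $i$, while the paper picks two differently colored edges in some $H_1$, $H_2$ and contradicts $E(H_1)\cap E(H_2)\neq\emptyset$; the substance is identical.
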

\begin{proof}
We prove by contradiction. Suppose $\Gamma$ is an $MD$-coloring of $H$ with $|\Gamma(H)|\geq2$. Then there are two edges $e_1,e_2$ of $H$ such that $\Gamma(e_1)=1$ and $\Gamma(e_2)=2$.
W.l.o.g., let $e_1\in E(H_1)$ and $e_2\in E(H_2)$. Since $\Gamma$ is an $MD$-coloring restricted on $H_1$ (also $H_2$) and $md(H_1)=md(H_2)=1$, all edges of $H_1$ are
colored by $1$ and all edges of $H_2$ are colored by $2$ under $\Gamma$, which contradicts that $E(H_1)\cap E(H_2)\neq \emptyset$.
\end{proof}

\begin{lemma} \label{sub}
If $H$ is a connected spanning subgraph of $G$, then $md(H)\geq md(G)$.
\end{lemma}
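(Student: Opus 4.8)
The plan is to take an extremal $MD$-coloring $\Gamma$ of $G$, using $md(G)$ colors, restrict it to the edges of $H$, and argue that this restriction is still an $MD$-coloring of $H$ that uses at least $md(G)$ colors. The inequality $md(H)\ge md(G)$ then follows from the definition of $md(H)$ as the \emph{maximum} number of colors in an $MD$-coloring of $H$.

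First I would check that $\Gamma$ restricted to $E(H)$ is an $MD$-coloring of $H$. Let $u,v$ be any two vertices of $H$; since $H$ is spanning, $u,v\in V(G)$, so $\Gamma$ provides a monochromatic $uv$-cut $M$ in $G$, say $M\subseteq E(G)$ with all edges of $M$ colored the same. Then $G-M$ is disconnected with $u,v$ in different components; since $H$ is a subgraph of $G$, $H-(M\cap E(H))$ is also disconnected with $u,v$ in different components (any $uv$-path in $H$ is a $uv$-path in $G$, hence meets $M$, hence meets $M\cap E(H)$). Thus $M\cap E(H)$ is a nonempty monochromatic $uv$-cut of $H$. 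This is essentially the content of Proposition~\ref{key}, so I would just invoke it: $\Gamma$ restricted on $H$ is an $MD$-coloring of $H$.

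Next I would verify that the restriction still uses all $md(G)$ colors, i.e.\ that no color of $\Gamma$ is lost when we pass from $G$ to $H$. Here is where the spanning hypothesis does real work. Suppose some color class $E_c=\{e\in E(G):\Gamma(e)=c\}$ is entirely contained in $E(G)\setminus E(H)$. Pick an edge $e=ab\in E_c$. Since $H$ is a connected spanning subgraph, $a$ and $b$ lie in $H$ and there is an $ab$-path $P$ in $H$; together with $e$ this forms a cycle in $G$. A monochromatic $ab$-cut $M$ of $G$ under $\Gamma$ must contain $e$, hence has color $c$; but $M$ must also separate $a$ from $b$, so it meets the path $P$, i.e.\ some edge of $P\subseteq E(H)$ has color $c$ — contradicting $E_c\cap E(H)=\emptyset$. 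Hence every color of $\Gamma$ appears on some edge of $H$, so the restricted coloring uses exactly $md(G)$ colors.

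The only real obstacle is making sure the monochromatic $ab$-cut genuinely must intersect the chosen $ab$-path in $H$; this is immediate because a cut separating $a$ and $b$ meets every $ab$-path, and the path lies in $H$. Combining the two steps: $\Gamma$ restricted to $H$ is an $MD$-coloring of $H$ using $md(G)$ colors, so $md(H)\ge md(G)$. \qed
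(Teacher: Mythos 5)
Your proof is correct. You restrict an extremal $MD$-coloring $\Gamma$ of $G$ to $E(H)$, note (as in Proposition~\ref{key}, plus the observation that $M\cap E(H)\neq\emptyset$ because $H$ is connected and spanning) that the restriction is an $MD$-coloring of $H$, and then show no color class can lie entirely in $E(G)\setminus E(H)$: a monochromatic $ab$-cut must contain the edge $e=ab$ itself and must also meet the $ab$-path supplied by the connectivity of $H$, so the color of $e$ already appears on $E(H)$. This is the same key observation the paper uses, but organized differently: the paper deletes the edges of $E(G)\setminus E(H)$ one at a time, arguing at each step that if $md$ dropped then the deleted edge $e=ab$ would be the unique edge of its color, whence no monochromatic $ab$-cut could exist because $e$ lies on a cycle; the lemma then follows by iterating. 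Your one-shot argument avoids that iteration (and the need to note that every deleted edge lies on a cycle of the current graph at its deletion step, which the paper leaves implicit), and it directly exhibits an $MD$-coloring of $H$ with $md(G)$ colors, which is arguably cleaner than the paper's proof by contradiction at each deletion.
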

\begin{proof}
 Let $H'$ be a graph obtained from $G$ by deleting an edge $e=ab$ where $e$ is in a cycle of $G$. If $md(H')\leq md(G)+1$, let $\Gamma$ be an extremal $MD$-coloring of $G$. Then $\Gamma$
 is an $MD$-coloring that is restricted on $H'$, and this implies that $e$ is the only edge of $G$ colored by $\Gamma(e)$. However, $e$ is in a cycle of $G$, and the monochromatic
 $ab$-cut has at least $2$ edges, a contradiction. Therefore, $md(H')\geq md(G)$.

If $H$ is a connected proper spanning subgraph of $G$, $H$ can be obtained from $G$ by deleting some edges in cycles one by one, consecutively. Therefore, the lemma is true.
\end{proof}
\begin{corollary}
For any connected graph $G$, $md(G)\leq n-1$, and the equality holds if and only if $G$ is a tree.
\end{corollary}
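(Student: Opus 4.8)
The plan is to combine Lemma \ref{sub} with the exact values of $md$ on trees and unicyclic graphs that we already have. For the inequality $md(G)\le n-1$, I would fix a spanning tree $T$ of $G$; since $T$ is a connected spanning subgraph of $G$, Lemma \ref{sub} gives $md(G)\le md(T)$, and by Proposition \ref{block} a tree consists of $n-1$ trivial blocks each with $md$ equal to $1$, so $md(T)=n-1$. Hence $md(G)\le n-1$. The ``if'' part of the equality is then immediate: if $G$ itself is a tree, the same computation gives $md(G)=n-1$.

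The substance is the ``only if'' direction, so I would assume $G$ is connected but not a tree and show $md(G)\le n-2$. Since $G$ is not a tree it contains a cycle $C$, say with $|C|=t\ge 3$. The key step is to exhibit a connected spanning subgraph $H$ of $G$ that is unicyclic with cycle exactly $C$: contract $C$ to a single vertex in $G$, choose a spanning tree $T'$ of the resulting connected (multi)graph, and let $H$ be the subgraph of $G$ whose edges are the edges of $C$ together with the edges of $G$ corresponding to the edges of $T'$. Then $H$ is spanning and connected, and counting edges gives $|E(H)|=t+(n-t)=n$, so $H$ is unicyclic; its unique cycle must be $C$ because every edge of $H$ coming from $T'$ is a bridge of $H$.

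Given this $H$, Proposition \ref{P1} yields $md(H)=n-\lceil t/2\rceil$, and since $t\ge 3$ we have $\lceil t/2\rceil\ge 2$, so $md(H)\le n-2$. Applying Lemma \ref{sub} to the connected spanning subgraph $H$ of $G$ gives $md(G)\le md(H)\le n-2<n-1$, which shows that $md(G)=n-1$ can hold only when $G$ is a tree. I expect the only mildly delicate point to be the verification that the constructed $H$ is genuinely unicyclic with cycle $C$ (the edge count plus the observation that the $T'$-edges are bridges); once that is in place, everything else is a direct application of Lemma \ref{sub}, Proposition \ref{block}, and Proposition \ref{P1}.
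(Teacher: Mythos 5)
Your proposal is correct and follows essentially the same route as the paper: spanning tree plus Lemma \ref{sub} for the upper bound, and a connected unicyclic spanning subgraph together with Proposition \ref{P1} and Lemma \ref{sub} for the ``only if'' direction. The only difference is that you spell out the construction of the unicyclic spanning subgraph (via contracting the cycle), which the paper simply asserts exists.
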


\begin{proof}
Since each connected graph has a spanning tree $T$, by Lemma \ref{sub} we have that $md(G)\leq md(T)=n-1$ if $G$ is connected. On the other hand, if $G$ is a connected  graph with $md(G)=n-1$ but $G$ is not a tree, then $G$ has a connected unicycle spanning subgraph $G'$. By Proposition \ref{sub} and \ref{P1}, $md(G)\leq md(G')<n-1$, a contradiction.
\end{proof}

\begin{lemma}\label{secq}
Let $G$ be a connected graph and $v\in V(G)$. If $v$ is neither a pendent vertex nor a cut-vertex of $G$, then $md(G)\leq md(G-v)$.
\end{lemma}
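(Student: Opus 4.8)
The plan is to take an extremal $MD$-coloring $\Gamma$ of $G$ and to prove that the coloring it induces on $G-v$ still uses all $md(G)$ colors. By Proposition~\ref{key} this induced coloring is automatically an $MD$-coloring of $G-v$, so the whole matter reduces to showing that \emph{no color of $\Gamma$ appears only on edges incident with $v$}. To keep the bookkeeping simple I would first reduce to the case $d_G(v)=2$: if $d_G(v)\ge 3$, choose two neighbours $u_1,u_2$ of $v$ and delete every edge at $v$ other than $vu_1,vu_2$; the resulting graph $G^*$ is a connected spanning subgraph of $G$ in which $v$ has degree $2$ and is still not a cut-vertex (since $G^*-v=G-v$ is connected). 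Then $md(G^*)\ge md(G)$ by Lemma~\ref{sub} and $G^*-v=G-v$, so it is enough to establish $md(G^*)\le md(G^*-v)$, i.e.\ to handle vertices of degree $2$.

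So suppose $d_G(v)=2$ with incident edges $e_1=vu_1$ and $e_2=vu_2$. First I would note that each of $e_1,e_2$ lies on a cycle of $G$: were $e_1$ a bridge, then (as $v$ still has the edge $e_2$) deleting $e_1$ would put $u_1$ in one component and $v,u_2$ in another, every path between these parts would pass through $v$, and $G-v$ would be disconnected --- impossible. Now take an extremal $MD$-coloring $\Gamma$ and assume, for contradiction, that some color $c$ is used only on edges at $v$. It cannot be used on just one of $e_1,e_2$, say only on $e_1$: then $e_1$ is the unique edge of color $c$, so the monochromatic cut separating the adjacent vertices $u_1$ and $v$ --- which must contain $e_1$ and consist of edges of color $c$ only --- equals $\{e_1\}$, forcing $e_1$ to be a bridge, which we just excluded. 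Hence $c$ is used on both $e_1$ and $e_2$ and nowhere else. But then the pair $u_1,u_2$ has no monochromatic cut: such a cut must kill the $u_1$--$u_2$ path through $v$, so it contains $e_1$ or $e_2$, hence (being monochromatic in color $c$) it is a subset of $\{e_1,e_2\}$; yet $\{e_1\}$ and $\{e_2\}$ are not edge-cuts, and $\{e_1,e_2\}$ merely isolates $v$ while $u_1,u_2$ remain joined in the connected graph $G-v$. This contradiction shows that $\Gamma$ induces an $MD$-coloring of $G-v$ with $md(G)$ colors, so $md(G-v)\ge md(G)$.

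The step I expect to be the real obstacle is ruling out a color that occupies the whole star at $v$. Indeed, the weaker inequality $md(G-v)\ge md(G)-1$ is cheap: delete all edges at $v$ but one so that $v$ becomes pendent, apply Lemma~\ref{sub}, and then Proposition~\ref{block}, noting that the resulting pendent block contributes exactly $1$. The missing ``$+1$'' is exactly the possibility that an extremal coloring of $G$ might devote one color to the entire set of edges at $v$ --- a set which genuinely \emph{is} a monochromatic cut (it isolates $v$), so it is not a priori wasteful. Dropping the degree of $v$ to $2$ before coloring is what disposes of this: once $v$ has only the two edges $e_1,e_2$, the $u_1,u_2$-cut argument above shows no color can be trapped there. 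I would therefore expect the write-up to concentrate its care on that degree-$2$ contradiction, the rest (connectivity of the modified graphs, the non-cut property of $v$ in them) being routine.
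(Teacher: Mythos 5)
Your proof is correct and rests on the same key point as the paper's: no color of an extremal $MD$-coloring can appear only on edges incident with $v$, since otherwise the two relevant neighbors of $v$ would have no monochromatic cut (they remain joined by a path avoiding $v$). The only difference is packaging --- the paper keeps $d_G(v)$ arbitrary and localizes the contradiction to a cycle through two edges at $v$ via Proposition~\ref{key}, whereas you first force $d_G(v)=2$ using Lemma~\ref{sub} and run the same contradiction in $G$ itself; both routes are sound.
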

\begin{proof}
The following claim is useful for the proof of this lemma and for other proofs later.
\begin{claim}\label{G-v}
For any $MD$-coloring $\Gamma'$ of $G$, $\Gamma'(G)-\Gamma'(G-v)=\emptyset$.
\end{claim}
\begin{proof}
We proceed by contradiction. Let $e=vu$ be an edge of $ E(G)-E(G-v)$ and $\Gamma'(e)\notin \Gamma'(G-v)$. Since $d_G(v)\geq2$, there is another edge incident with $v$, say $f=vw$.
Because $v$ is not a cut-vertex, there is a cycle $C$ of $G$ with $E(C)- E(G-v)=\{e,f\}$. Because $\Gamma'$ is an $MD$-coloring restricted on $C$,
there are at least two edges in the monochromatic $uv$-cut of $C$ and the monochromatic $uv$-cut contains $e$. Since $\Gamma'(G)-\Gamma'(G-v)\neq\emptyset$,
$f$ is in the monochromatic $uv$-cut, i.e., $\Gamma'(e)=\Gamma'(f)$. Then, there is no monochromatic $uw$-cut in $C$, a contradiction.
\end{proof}

Let $\Gamma$ be an extremal $MD$-coloring of $G$. Then $\Gamma$ is an $MD$-coloring restricted on $G-v$. By Claim \ref{G-v}, $\Gamma(G)-\Gamma(G-v)=\emptyset$.
Therefore $md(G)=|\Gamma(G)|=|\Gamma(G-v)|\leq md(G-v)$.
\end{proof}
\begin{theorem}\label{n-2}
If $G$ is a $2$-connected graph, then $md(G)\leq \lfloor\frac{n}{2}\rfloor$.
\end{theorem}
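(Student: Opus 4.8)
The plan is to induct on $n$, using Lemma \ref{secq} as the main engine. The base cases are small $2$-connected graphs, where $n=3$ forces $G=K_3$ and $md(K_3)=1=\lfloor 3/2\rfloor$ by Proposition \ref{P1}, and $n=4$ can be checked directly ($md(G)\le 2$ for any $2$-connected $G$ on $4$ vertices). For the inductive step, let $G$ be $2$-connected with $n\ge 5$. If $G$ has a vertex $v$ that is neither pendent nor a cut-vertex and such that $G-v$ is still connected but \emph{also} still $2$-connected, then Lemma \ref{secq} gives $md(G)\le md(G-v)\le \lfloor (n-1)/2\rfloor\le\lfloor n/2\rfloor$ by induction, and we are done. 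Since $G$ is $2$-connected, every vertex is non-pendent and non-cut, so the only obstruction is that $G-v$ might fail to be $2$-connected for \emph{every} $v$.

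So the crux is the case where $G$ is a \emph{minimally $2$-connected-like} graph: $G-v$ has a cut-vertex for each $v\in V(G)$. I would handle this by a more careful structural/counting argument directly on an extremal $MD$-coloring $\Gamma$ of $G$. The key observation to exploit is the one used repeatedly above: if a color class of $\Gamma$ induces a single edge $e=ab$ lying on a cycle, then the monochromatic $ab$-cut must contain $e$ together with at least one more edge of the same color, a contradiction; hence in a $2$-connected graph (where every edge lies on a cycle) \emph{every color class has at least two edges}. This immediately yields the weaker bound $md(G)\le m/2$, which is not yet what we want since $m$ can be large. To get $\lfloor n/2\rfloor$, I would argue that one can find a spanning subgraph on which the count is tight: by Lemma \ref{sub}, $md(G)\le md(H)$ for any connected spanning subgraph $H$, so it suffices to find a $2$-connected (or unicyclic, via Proposition \ref{P1}) spanning subgraph $H$ with $md(H)\le\lfloor n/2\rfloor$. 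A $2$-connected graph always has an ear decomposition, and one can peel it down to a spanning subgraph that is a subdivision of a small graph; alternatively, one shows directly that any $2$-connected graph has a spanning $2$-connected subgraph with at most, say, $n$ edges that is amenable to the doubling bound, giving $m\le n$ and hence $md\le n/2$ after noting the parity.

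Cleaner still, I expect the intended route is: take an extremal $MD$-coloring $\Gamma$ of $G$ with color classes $E_1,\dots,E_k$, $k=md(G)$. Each $E_i$ is a monochromatic edge set and, as noted, $|E_i|\ge 2$. Now I would show that the $E_i$'s can be used to build $k$ edge-disjoint structures each ``spending'' two vertices: pick for each color a cut it realizes and, using $2$-connectedness together with the fact that a monochromatic cut of a $2$-connected graph is a bond (minimal edge-cut) with both sides nonempty, extract a matching-like system of vertex pairs. The main obstacle will be precisely this last combinatorial step — translating ``$k$ color classes, each a $2$-edge monochromatic cut'' into ``$2k\le n$'' — since one must rule out the color classes overlapping in complicated ways on a small vertex set; I would resolve it by contracting one side of each bond and inducting, or by a direct double-counting of the vertices separated, handing the remaining low-vertex cases (which is where $G-v$ is never $2$-connected) to an explicit analysis of graphs that are subdivisions of multigraphs on few vertices, such as theta graphs and generalized theta graphs, for which $md$ is computed as in Proposition \ref{P1}.
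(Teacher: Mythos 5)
Your reduction via Lemma \ref{secq} is sound as far as it goes: in a $2$-connected graph every vertex is neither pendent nor a cut vertex, so $md(G)\le md(G-v)$ for every $v$, and when some $G-v$ is still $2$-connected the induction closes. But you correctly identify the residual case --- every $G-v$ has a cut vertex --- as the crux, and you never actually close it; what you offer there are sketches with acknowledged holes. Concretely: (i) the observation that every color class of an extremal $MD$-coloring of a $2$-connected graph has at least two edges gives only $md(G)\le m/2$, which you admit is insufficient; (ii) the proposed rescue via a sparse spanning subgraph does not exist in general --- a $2$-connected spanning subgraph with at most $n$ edges is precisely a Hamiltonian cycle, and a spanning unicyclic subgraph with cycle $C$ has $md=n-\lceil|C|/2\rceil$ by Proposition \ref{P1}, which is at most $\lfloor n/2\rfloor$ only when $|C|\ge n-1$, so Lemma \ref{sub} does not help unless $G$ is (near-)Hamiltonian; (iii) the step ``$k$ color classes, each realizing a monochromatic bond, force $2k\le n$'' is exactly what you name as the main obstacle and leave open, deferring either to contractions (no relation between $md$ of $G$ and of a contraction is available in the paper) or to an unspecified analysis of generalized theta graphs. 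Since the bad case (e.g. critically $2$-connected graphs) is not a negligible family, the proposal as written does not prove the theorem.

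For comparison, the paper's proof avoids vertex deletion entirely and inducts on the number of ears in an ear decomposition $\{C,P_1,\dots,P_t\}$. Restricting an extremal coloring $\Gamma$ of $G$ to the graph $G'$ obtained by removing the open last ear $P_t$, and looking at the cycle formed by $P_t$ together with an $ab$-path $L$ of $G'$ (where $a,b$ are the ends of $P_t$), one sees that some edge of $P_t$ must carry a color already present on $L\subseteq G'$, and that every color in $\Gamma(G)\setminus\Gamma(G')$ must occur on at least two of the remaining $|P_t|-1$ ear edges; hence at most $\lfloor(|P_t|-1)/2\rfloor$ new colors, and $md(G)\le\lfloor(n-|P_t|+1)/2\rfloor+\lfloor(|P_t|-1)/2\rfloor\le\lfloor n/2\rfloor$. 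This ear-by-ear count is exactly the kind of control you would need in your hard case (long chains of degree-two vertices and theta-like structures), so if you want to salvage your route, adopting the ear decomposition as the induction parameter is the natural fix.
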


\begin{proof}
Let $F=\{C,P_1,\cdots,P_t\}$ be an ear-decomposition of $G$ where $C$ is a cycle and $P_i$ is a path for $i\in[t]$.
The proof proceeds by induction on $|F|$. If $|F|=1$, then $G$ is a cycle, the theorem holds. If $|F|=t+1\geq2$, let $\Gamma$ be an extremal $MD$-coloring of $G$.
Then $\Gamma$ is an $MD$-coloring restricted on $G'$, where $G'$ is a graph obtained from $G$ by deleting $E(P_t)$ and the internal vertices of $P_t$.
By induction, we have
$$|\Gamma(G')|\leq md(G')\leq \lfloor\frac{|G'|}{2}\rfloor= \lfloor\frac{n-|P_t|+1}{2}\rfloor.$$

Suppose that the ends of $P_t$ are $a,b$ and $L$ is an $ab$-path of $G'$. Then $C'=L\cup P_t$ is a cycle of $G$. Because $\Gamma$ is an $MD$-coloring restricted on $C'$,
the monochromatic $ab$-cut contains at least one edge of $L$ and at least one edge of $P_t$, say $e$. Therefore, there are at most $|P_t|-1$ edges colored by $\Gamma(G)-\Gamma(G')$.
Since each color of $\Gamma(G)-\Gamma(G')$ colors at least two edges of $P_t-e$, then $|\Gamma(G)-\Gamma(G')|\leq \lfloor\frac{|P_t|-1}{2}\rfloor$. So,
$$md(G)=|\Gamma(G)|= |\Gamma(G')|+|\Gamma(G)-\Gamma(G')|\leq \lfloor\frac{n-|P_t|+1}{2}\rfloor+\lfloor\frac{|P_t|-1}{2}\rfloor\leq\lfloor\frac{n}{2}\rfloor.$$
\end{proof}

\section{Graphs with monochromatic disconnection number one}

In this section we will consider the monochromatic disconnection numbers for
some special graphs, such as triangular graphs (i.e., graphs with each of its edges in a triangle), complete multipartite graphs, chordal graphs,
square graphs and line graphs (the definitions of the last four graphs are as usual, we omit them). We denote the square graph and the line graph of
a graph $G$ by $G^2$ and $L(G)$, respectively.

For a graph $G$, we define a {\em relation} $\theta$ on the edge set $E(G)$ as follows: for two edges $e$ and $f$ of $G$, we say that $e\theta f$ if there exists a sequence of
subgraphs $G_1,\cdots,G_k$ of $G$ with $md(G_i)=1$ for any $i\in[k]$,
such that $e\in G_1$ and $e'\in G_k$, and $|V(G_i)\cap V(G_{i+1})|\geq2$ for $i\in[k-1]$.
It is easy to check that $\theta$ is {\em symmetric, reflexive} and {\em transitive} and therefore an equivalent relation on $E(G)$. We call a graph $G$ a {\em closure} if $e\theta e'$ for any two edges $e,e'$ of $E(G)$.

\begin{lemma} \label{eq-c-1}
If a graph $G$ is a closure, then $md(G)=1$.
\end{lemma}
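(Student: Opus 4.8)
The plan is to argue by contradiction, using Proposition~\ref{key} to push a hypothetical non-trivial $MD$-coloring of $G$ down to the connected pieces that witness the closure property, and then a short monochromatic-cut argument to force two consecutive such pieces to use the \emph{same} color. (That $md(G)\ge 1$ is clear, since the trivial coloring of the connected graph $G$ is an $MD$-coloring; so it suffices to prove $md(G)\le 1$.)

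Concretely, I would first assume $G$ has an $MD$-coloring $\Gamma$ with $|\Gamma(G)|\ge 2$ and pick edges $e_1,e_2$ of $G$ with $\Gamma(e_1)\ne\Gamma(e_2)$. Since $G$ is a closure, $e_1\,\theta\,e_2$, so there is a sequence of connected subgraphs $G_1,\dots,G_k$ of $G$ with $md(G_i)=1$ for all $i\in[k]$, with $e_1\in E(G_1)$ and $e_2\in E(G_k)$, and with $|V(G_i)\cap V(G_{i+1})|\ge 2$ for each $i\in[k-1]$. Each $G_i$ has an edge ($G_1$ contains $e_1$, $G_k$ contains $e_2$, and any other $G_i$ shares two vertices with a neighbour, hence is connected on at least two vertices). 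By Proposition~\ref{key} the restriction of $\Gamma$ to $G_i$ is an $MD$-coloring of $G_i$, and since $md(G_i)=1$ it uses exactly one color, say $c_i$. As $c_1=\Gamma(e_1)\ne\Gamma(e_2)=c_k$, some index $i\in[k-1]$ has $c_i\ne c_{i+1}$; the task is then to show this is impossible.

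For that step I would fix two distinct vertices $u,v\in V(G_i)\cap V(G_{i+1})$ and take a monochromatic $uv$-cut $M$ of $G$, which exists because $\Gamma$ is an $MD$-coloring of $G$. Since $G_i$ is connected and contains both $u$ and $v$, it contains a $uv$-path, and this path (being a $uv$-path of $G$) must meet $M$; hence $M$ has an edge lying in $E(G_i)$, which is colored $c_i$. The same argument applied to $G_{i+1}$ shows $M$ has an edge colored $c_{i+1}$. Since $M$ is monochromatic, $c_i=c_{i+1}$, a contradiction; so every $MD$-coloring of $G$ is trivial and $md(G)=1$.

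I expect the only real content to lie in that last paragraph: the point to notice is that a monochromatic cut separating two vertices shared by two consecutive pieces is pinned down to the single color of each piece, and this needs only connectedness of the pieces — not a shared edge — so Lemma~\ref{subb} cannot be invoked directly on consecutive $G_i$'s. Everything else (the restriction being again an $MD$-coloring, the pieces being one-colored, and a pigeonhole along the chain) is routine given the earlier results.
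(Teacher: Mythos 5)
Your proposal is correct and follows essentially the same route as the paper: restrict the hypothetical non-trivial $MD$-coloring to each $G_i$ (forced monochromatic since $md(G_i)=1$) and use the two shared vertices of consecutive pieces to force $c_i=c_{i+1}$, since a monochromatic cut separating them must meet a path inside each piece. Your last paragraph merely spells out the step the paper compresses into ``otherwise there is no monochromatic $a_ib_i$-cut,'' so no substantive difference.
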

\begin{proof}
Suppose $md(G)\geq2$ and $\Gamma$ is an extremal $MD$-coloring of $G$. Then there exist two edges, say $e,e'$, of $G$, such that $\Gamma(e_1)\neq \Gamma(e_2)$. Since $G$ is a closure, there is a sequence of subgraphs $G_1,\cdots,G_k$ with $md(G_i)=1$ for any $i\in[k]$,
such that $e\in G_1$ and $e'\in G_k$, and $G_i$ and $G_{i+1}$ have at least two common vertices, say $a_i,b_i$, for $i\in[k-1]$.
Since all edges of each $G_i$ must be colored with a same color
under $\Gamma$, $\Gamma(G_i)=\Gamma(G_{i-1})$. Otherwise there is no monochromatic $a_ib_i$-cut. Therefore, $\Gamma(e)=\Gamma(G_1)=\Gamma(G_2)=\cdots=\Gamma(G_{k})=\Gamma(e')$, a contradiction. So, $md(G)=1$.
\end{proof}

\begin{theorem} \label{MDC-special}
If $G$ is one of the following graphs, then $md(G)=1$.
\begin{enumerate}
\item $G=H\vee v$ where $H$ is a connected graph and $v$ is an additional vertex;
\item $G$ is a multipartite graph other than $K_{1,n-1}$ and $K_{2,2}$;
\item $G$ is a $2$-connected chordal graph;
\item $G=H^2$ where $H$ is a connected graph;
\item $G=L(H)$ where $H$ is a connected triangular graph.
\end{enumerate}
\end{theorem}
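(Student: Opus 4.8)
My plan is to prove all five cases by the same device: show that $G$ is a \emph{closure} and then apply Lemma~\ref{eq-c-1}. To certify that $G$ is a closure it suffices to exhibit a family $\mathcal H=\{H_\lambda\}$ of subgraphs of $G$, each with $md(H_\lambda)=1$, such that every edge of $G$ lies in some $H_\lambda$ and the ``overlap graph'' on $\mathcal H$ (with an edge between $H_\lambda,H_\mu$ iff $|V(H_\lambda)\cap V(H_\mu)|\ge 2$) is connected; any two edges of $G$ then lie in $\theta$-related members of $\mathcal H$. The building bricks will be triangles and complete graphs: $md(K_3)=1$ by Proposition~\ref{P1}, and $md(K_r)=1$ for every $r\ge 2$ by case~(1) applied to $H=K_{r-1}$ (note $K_1\vee v=K_2$). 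So I would establish~(1) first and use $md(K_r)=1$ freely afterwards. For~(1), assuming $|H|\ge 2$ (else $G=K_2$), fix a spanning tree $T$ of $H$; for each $uw\in E(T)$ the set $\{v,u,w\}$ induces a triangle of $G$, every edge $vu$ with $u\in V(H)$ also lies in some triangle $\{v,u,w\}$ with $uw\in E(H)$, and two triangles through $v$ whose base edges share an endpoint overlap in $\ge 2$ vertices; connectedness of $T$ then yields a closure.

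For~(2), when $k\ge 3$ and $G=K_{n_1,\dots,n_k}$, every edge $uw$ (with $u\in V_i$, $w\in V_j$) lies in a triangle $\{u,w,z\}$ for any $z$ in a third part, two such part-transversal triangles agreeing in two parts overlap in $\ge 2$ vertices, and the family of all part-transversal triangles is connected (change one coordinate at a time), so $G$ is a closure. When $k=2$, excluding $K_{1,n-1}$ and $K_{2,2}$ leaves $G=K_{m,n}$ with $m\ge 2$, $n\ge 3$; fixing two vertices $a_1,a_2$ on the $m$-side, the copies of $K_{2,3}$ on $\{a_1,a_2\}$ together with three $n$-side vertices cover all edges at $a_1,a_2$ and pairwise share $\{a_1,a_2\}$, while each edge $a_ib_j$ with $i\ge 3$ lies in a copy $\{a_1,a_i,b_j,b_k,b_l\}$ overlapping one of the former in $\ge 2$ vertices --- so once $md(K_{2,3})=1$ is known, $G$ is a closure. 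To prove $md(K_{2,3})=1$: let the parts be $\{a,b\}$ and $\{x,y,z\}$ and let $\Gamma$ be an $MD$-coloring. Each of $x,y,z$ is neither pendent nor a cut-vertex, and $K_{2,3}-x$, $K_{2,3}-y$, $K_{2,3}-z$ are $4$-cycles, so by Propositions~\ref{key} and~\ref{P1} each restriction of $\Gamma$ is an $MD$-coloring of $C_4$, hence constant or two-colored with the two color classes being the two pairs of opposite edges (a quick check). By Claim~\ref{G-v} every color on an edge at $x$ already appears on $K_{2,3}-x$ (similarly for $y,z$); if any of the three $4$-cycles is monochromatic this forces all six edges to one color, and otherwise the opposite-pair patterns of $K_{2,3}-x$ and $K_{2,3}-y$ give $\Gamma(ax)=\Gamma(ay)=\Gamma(bz)$ and $\Gamma(bx)=\Gamma(by)=\Gamma(az)$, whereupon the pattern of $K_{2,3}-z$ (namely $\Gamma(ax)=\Gamma(by)$) collapses these to a single color. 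Hence $md(K_{2,3})=1$.

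For~(3), a chordal graph carries a clique tree on its maximal cliques in which adjacent cliques intersect in a minimal vertex separator of $G$; $2$-connectedness rules out separators of size $<2$, so adjacent maximal cliques overlap in $\ge 2$ vertices. As maximal cliques are complete (hence have $md=1$), cover all edges, and form a connected tree, $G$ is a closure (a single maximal clique means $G=K_n$, done directly). For~(4), in $G=H^2$ the vertex $w$ together with its $H$-neighbors spans a complete subgraph $Q_w$ of $H^2$ (two neighbors of $w$ are at $H$-distance $\le 2$); $Q_w$ and $Q_{w'}$ both contain $\{w,w'\}$ whenever $ww'\in E(H)$, every edge of $H^2$ (an $H$-edge or a distance-$2$ pair) lies in some $Q_w$, and the $Q_w$ chain together along a spanning tree of $H$, giving a closure. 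For~(5), in $G=L(H)$ with $H$ connected and triangular, the edges of $H$ at a vertex $w$ form a clique $C_w$ of $L(H)$ and every edge of $L(H)$ lies in some $C_w$; the catch is that $C_w$ and $C_{w'}$ for $ww'\in E(H)$ share only the single vertex $ww'$ of $L(H)$, which is precisely where triangularity enters: $ww'$ lies in a triangle $ww't$ of $H$, whose three edges span a triangle $T$ in $L(H)$ with $|V(T)\cap V(C_w)|=|V(T)\cap V(C_{w'})|=2$, so $C_w, T, C_{w'}$ is a legal chain, and running this along a spanning tree of $H$ produces a closure.

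I expect the one genuinely delicate point to be the base case $md(K_{2,3})=1$: unlike everywhere else, $K_{2,3}$ is triangle-free and its only subgraphs with $md=1$ are single edges, so the closure machinery cannot be bootstrapped and the direct argument above (via Claim~\ref{G-v} and the description of $MD$-colorings of $C_4$) appears unavoidable. A smaller point is that triangularity in~(5) is essential rather than cosmetic --- without it the natural cliques $C_w$ violate the overlap condition, consistent with $md(L(P_n))=n-2>1$. Beyond these, the work is routine verification that each stated family covers $E(G)$ and has connected overlap graph.
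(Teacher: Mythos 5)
Your proof is correct, but it takes a genuinely different route from the paper's for most of the theorem. The paper only invokes the closure machinery (Lemma~\ref{eq-c-1}) for case (5); cases (1)--(4) are handled there by induction combined with vertex deletion (Lemma~\ref{secq}), spanning-subgraph monotonicity (Lemma~\ref{sub}), the union lemma (Lemma~\ref{subb}), and, for chordal graphs, a simplicial order. You instead run everything through a single template: exhibit a covering family of subgraphs with $md=1$ (triangles through the apex $v$ in (1), part-transversal triangles or copies of $K_{2,3}$ in (2), maximal cliques glued along a clique tree in (3), closed-neighborhood cliques $Q_w$ in (4), vertex-stars of $L(H)$ bridged by triangles coming from triangles of $H$ in (5)) whose overlap graph is connected, and conclude $G$ is a closure. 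The two treatments coincide essentially only on the base computation $md(K_{2,3})=1$ (your three-$C_4$ analysis matches the paper's) and on case (5), where your chain $C_w, T, C_{w'}$ is a cleaner packaging of the paper's induction on $d_{L(G)}(l_1,l_2)$. What your route buys is uniformity and transparency --- no induction bookkeeping, no reduction of the $r$-partite case to case (1), and the role of triangularity in (5) is laid bare; also note your (1) avoids circularity since it uses only $md(K_3)=1$ from Proposition~\ref{P1} before $md(K_r)=1$ is derived. What it costs is that case (3) imports the clique-tree/minimal-separator theorem for chordal graphs, a heavier external fact than the simplicial order the paper cites from Bondy--Murty (though your observation that $2$-connectivity forces separators, hence adjacent-clique intersections, of size at least $2$ is sound); the paper's vertex-deletion argument stays entirely within its own toolkit. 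The only places needing a touch more care in a written version are the ``change one coordinate at a time'' connectivity of the transversal-triangle family in (2) (fine because any two $3$-subsets of the $k\ge 3$ parts are linked by single swaps) and the remark in (5) that triangularity forces every vertex of $H$ meeting an edge to have degree at least $2$, so each $C_w$ is a clique on at least two vertices; both are routine.
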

\begin{proof}
{\bf (1)} \ If $H=K_1$, the result holds. If $|H|\geq2$, let $T$ be a spanning tree of $G$ and $u$ be a leaf of $T$. By induction, $md((T-u)\vee v)=1$. Since $V(T\vee v)-V((T-u)\vee v)=\{u\}$ and $u$ is neither a pendent vertex nor a cut vertex of $T\vee v$, by Lemma \ref{secq}, $md(T\vee v)\leq md((T-u)\vee v)$. Since $T\vee v$ is a connected spanning subgraph of $H\vee v$, by Lemma \ref{sub}, $md(H\vee v)\leq md(T\vee v)$. Therefore, $md(T\vee v)=1$.

{\bf (2)} \ We first show that $md(K_{2,3})=1$. Any $MD$-coloring of $C_4$ can have only two cases, one is trivial and the other is to assign colors $1,2$ to the four edges of $C_4$ alternately.
Let $H=K_{2,3}$ and the bipartition of $H$ be $A=\{a,c\}$ and $B=\{b,d,u\}$. If $md(H)\geq 2$, there is an $MD$-coloring $\Gamma$ of $K_{2,3}$ with $|\Gamma(H)|\geq2$. Therefore, at least one of the three $4$-cycles of $H$ has a nontrivial $MD$-coloring. Let the three $4$-cycles of $H$ be $H_1=H[a,b,c,d], H_2=H[a,b,c,u]$ and $H_3=H[a,d,c,u]$. By symmetry, suppose that $H_1$ is colored nontrivially, say $\Gamma(ad)=\Gamma(bc)=1$ and $\Gamma(ab)=\Gamma(cd)=2$.
Then $\Gamma$ is a nontrivial $MD$-coloring restricted on $H_2$ with $\Gamma(au)=1$ and $\Gamma(cu)=2$. It is obvious that $\Gamma$ is not an $MD$-coloring restricted on $H_3$, which contradicts that
$\Gamma$ is an $MD$-coloring of $G$. Therefore, $md(H)=1$.

Let $G$ be a complete bipartite graph other than $K_{1,n-1}$ and $K_{2,2}$. Suppose that $A,B$ are the bipartition of $G$ with $A=\{u,v,a_1,\cdots,a_s\}$ and $B=\{u',v',b_1,\cdots,b_t\}$.
Then at least one of $s,t$ is not zero. Let $G_i=G[u,u',v,v',a_i]$ and $G'_j=G[u,u'v,v',b_j]$ for $i\in[s]$ and $j\in[t]$. Since each $G_i$ or $G'_j$ is $K_{2,3}$ and
$\bigcap_{i\in [s]}E(G_i)\cap\bigcap_{j\in[t]}E(G'_j)=E(G[u,u',v,v'])$, by Lemma \ref{subb} we have $md(\bigcup_{i\in [s]}G_i\cup\bigcup_{j\in[t]}G'_j)=1$. Since $\bigcup_{i\in [s]}G_i\cup\bigcup_{j\in[t]}G'_j$ is a connected spanning subgraph of $G$, then $md(G)=1$.

Let $G=G_1$ be a complete $r$-partite graph with $r\geq3$ and let $V=\{v_1,\cdots,v_t\}$ be one part of $G$. Let $G_i=G-\{v_{1},\cdots,v_{i-1}\}$ for $i\in \{2,\cdots,t\}$. Then each $v_i$ is neither a pendent vertex nor a cut vertex of $G_i$, by Lemma \ref{secq}, $md(G_1)\leq md(G_2)\leq\cdots\leq md(G_t)$. However, $G_t=(G-V)\vee v_t$ and $G-V$ is a connected graph, which implies $md(G_t)=1$. Therefore, $md(G)=1$.

{\bf (3)} \ A {\em simplicial order }of a graph $H$ is an enumeration $v_1,\cdots,v_n$ of its vertices such that the neighbors of $v_i$ in $H[\{v_i,\cdots,v_n\}]$ induce a complete graph. A graph is chordal if and only if it has a simplicial order (see Corollary 9.22 on page 273 of \cite{B}). Suppose that a simplicial order of $G$ is $u_1,\cdots,u_n$ and $G_i=G[\{u_i,\cdots,u_n\}]$ for $i\in[n-2]$ (then $G=G_1$). Let $H_i=G_i[N_{G_i}(u_i)]$. Since $G$ is  $2$-connected, each $H_i$ is a complete graph other than $K_1$ and thus $G_{n-1}=G[v_{n-1},v_n]$ is a $K_2$. Therefore, $v_i$ is neither a pendent vertex nor a cut vertex of $G_i$ for $i\in[n-2]$, and hence by Lemma \ref{secq}, $md(G_i)\leq md(G_{i+1})$. So, $md(G)\leq md(G_{n-1})=1$.

{\bf (4)} \ The result holds for $G=K_2$. We prove it by induction on $|G|$. If $|G|\geq3$, let $T$ be a spanning tree of $G$ and $v$ be a leaf of $T$. Then $T^2-v=(T-v)^2$. Since $v$ is neither a pendent vertex nor a cut vertex of $T^2$, then $md(T^2)\leq md((T-v)^2)=1$. Since $T^2$ is a spanning tree of $G^2$, then $md(G^2)\leq md(T^2)$. Therefore, $md(G^2)=1$.

{\bf (5)} \ Let $A$, $B$ be two edge-induced subgraphs of $G$. We define
$$d_G(A,B)=\min\{d_G(u,v):~u\in V(A),~v\in V(B)\}.$$

Because the line graph of a triangular graph is also a triangular graph, we only need to show that $L(G)$ is a closure, i.e., we need to show that
for every two edges $l_1,l_2$ of $L(G)$, $l_1\theta l_2$. For each edge $e_i$ of $G$, we denote the corresponding vertex of $L(G)$ by $u_i$. We proceed by induction on $d_{L(G)}(l_1,l_2)$.

If $d_{L(G)}(l_1,l_2)=0$, this implies that $l_1$ and $l_2$ has a common vertex.
Let $l_1=u_1u_2$ and $l_2=u_2u_3$. If $G[e_1,e_2,e_3]$ is a triangle (denote it by $\Delta$) of $G$, then $L(\Delta)$ is a triangle of $L(G)$ containing $l_1,l_2$, and so $l_1\theta l_2$;
if just two edges of $e_1,e_2,e_3$ are in a triangle $\Delta$ of $G$, suppose $\Delta=G[e_1,e_2,e_4]$.
Then $G[e_2,e_3,e_4]$ is a star (call the star $S$). Because $L(\Delta)$ and $L(S)$ are two triangles of $L(G)$ and they have a common edge $u_2u_4$, and because $L(\Delta)$ contains $l_1$ and $L(S)$ contains $l_2$,
then $l_1\theta l_2$; if none of triangles of $G$ contains at least two of $e_1,e_2,e_3$, suppose $\Delta=G[e_4,e_2,e_5]$ is a triangle of $G$ where $e_4$ is adjacent to $e_3$ and $e_5$ is adjacent to $e_1$. Then $S_1=G[e_1,e_2,e_5]$ and $S_2=G[e_3,e_2,e_4]$ are two stars of $G$. Therefore, $L(S_1)$, $L(\Delta)$ and $L(S_2)$ are three triangles of $L(G)$ such that $L(S_1), L(\Delta)$ have a common edge $u_2u_5$ and $L(S_2), L(\Delta)$ have a common edge $u_2u_4$. So, $l_1\theta l_2$.

If $d_{L(G)}(l_1,l_2)=r>0$, let $l_1=u_1u_2$ and $l_2=u_3u_4$. Suppose $P$ is a shortest path of $L(G)$ connecting $l_1$ and $l_2$. Then $|P|=r$. W.l.o.g., suppose $l_3=u_3u_5$ is a pendent edge of $P$.
Then $d_{L(G)}(l_1,l_3)=r-1$ and $d_{L(G)}(l_2,l_3)=0$. By induction, $l_1\theta l_3$ and $l_2\theta l_3$. Therefore, $l_1\theta l_2$.
\end{proof}

{\bf Remark 1:} By Theorem \ref{MDC-special} (2), $md(K_n)=1$ for $n\geq2$. Let $v$ be a minimum degree vertex of $K_n^-$ ($n\geq4$). Then $K_n^--v=K_{n-1}$. Since $v$ is neither a pendent vertex nor a cut vertex of $K_n^-$, $md(K_n^-)\leq md(K_{n-1})=1$, i.e., $md(K_n^-)=1$ for $n\geq4$.
$\blacksquare$

As we have seen that a lot of graphs have the monochromatic disconnection number equal to 1. We may guess that the following result holds and it does hold actually.
\begin{theorem}
For almost all graphs $G$, $md(G)=1$ holds.
\end{theorem}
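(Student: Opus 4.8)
The plan is to invoke the standard fact that the random graph $G(n,1/2)$ is asymptotically almost surely $2$-connected, $3$-connected, and more to the point, highly connected and ``well-mixed'', and then show that such a graph is a closure in the sense defined before Lemma~\ref{eq-c-1}; once this is established, Lemma~\ref{eq-c-1} gives $md(G)=1$ immediately. So the whole task reduces to producing, a.a.s., for every pair of edges $e,e'$ of $G$, a chain of subgraphs $G_1,\dots,G_k$ with $md(G_i)=1$, consecutive ones sharing at least two vertices, with $e\in G_1$ and $e'\in G_k$. The natural building block to use is the complete bipartite graph $K_{2,3}$ (or any $K_{s,t}$ with $\{s,t\}\neq\{1,n-1\},\{2,2\}$), since by Theorem~\ref{MDC-special}(2) every such graph has monochromatic disconnection number $1$; even simpler, any induced $K_4$ or $K_4^-$ works by Theorem~\ref{MDC-special}(2) and Remark~1.

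First I would fix the precise model: ``almost all graphs'' means we work in $G(n,1/2)$ and show $\Pr[md(G)=1]\to 1$ as $n\to\infty$. Second, I would record the probabilistic input: a.a.s.\ every pair of vertices $u,v$ of $G$ has at least, say, $n/8$ common neighbours (a routine union bound over the $\binom n2$ pairs against a $\mathrm{Bin}(n-2,1/4)$ tail), and in particular a.a.s.\ every edge $uv$ lies in many copies of $K_4$ and, for any two edges, there is abundant room to interpolate. Third — the core step — I would show closure: given edges $e=ab$ and $e'=cd$, use the common-neighbour property to find a vertex $x$ adjacent to both $a$ and $b$, so $G[\{a,b,x\}]$ is a triangle; then walk from $\{a,b\}$ towards $\{c,d\}$ two vertices at a time, at each stage forming a $K_4$ (or $K_{2,3}$) on the current pair plus two further common neighbours lying ``closer'' to $\{c,d\}$, each such complete-ish graph having $md=1$ and overlapping the previous one in at least two vertices. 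Because a.a.s.\ $G$ has diameter $2$, in fact one can be blunt: the subgraph $G[\{a,b\}\cup N(a)\cap N(b)]$ already contains everything after two hops, and a short fixed-length chain of $K_{2,3}$'s through a common neighbour of $\{a,b\}$ and a common neighbour of $\{c,d\}$ suffices. Concluding, $e\theta e'$ for all $e,e'$, so $G$ is a closure and $md(G)=1$ by Lemma~\ref{eq-c-1}.

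The main obstacle is purely bookkeeping: making sure each subgraph $G_i$ in the chain genuinely has $md(G_i)=1$ and genuinely shares at least two vertices with its neighbour, while keeping the chain length bounded (or at least controlled) so that the argument is clean. The cleanest route is to avoid ``walking'' altogether: since a.a.s.\ any two vertices have $\ge 3$ common neighbours, pick a common neighbour $x$ of $a,b$, a common neighbour $y$ of $c,d$, and a common neighbour $z$ of $x,y$; then the four subgraphs $G[\{a,b,x,\cdot,\cdot\}]\supseteq K_{2,3}$-type pieces chained as $ab$–$x$–$z$–$y$–$cd$ each have $md=1$ and consecutive ones share two vertices, giving $e\theta e'$. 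I would also need to handle the negligible exceptional event and note that disconnected $G(n,1/2)$ occurs with probability $o(1)$, so connectivity is not an issue. I expect verifying the ``share at least two vertices'' condition for the chosen $K_{2,3}$ pieces to be the one spot requiring a little care, since a $K_{2,3}$ sitting on $\{a,b,x\}$ needs two more vertices from $N(a)\cap N(b)$ and these must be arranged to overlap the next piece; but with $\Theta(n)$ common neighbours available everywhere this is routine.
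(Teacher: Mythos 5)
Your proposal is correct and follows essentially the same route as the paper: work in $G(n,1/2)$, show by a union bound that a.a.s.\ every pair of vertices has at least three common neighbours, use this to place consecutive edges (along a path, or a short chain) in common copies of $K_{2,3}$, conclude that $G$ is a closure, and apply Lemma~\ref{eq-c-1}. The only difference is cosmetic bookkeeping in how the chain of $md=1$ pieces is organized, and your fallback ``walking'' version is exactly the paper's argument.
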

\begin{proof}
Let $G\sim \mathcal{G}_{n,\frac{1}{2}}$, that is, $G$ is a random graph on $n$ vertices chosen by picking each pair of vertices as an edge randomly and independently with probability $\frac{1}{2}$.
Let $\mathcal{A}_{uv}$ be the set of events that $u$ and $v$ have at most $2$ common neighbors and $\mathcal{A}=\bigcup_{u,v\in V(G)}\mathcal{A}_{uv}$. Let $\mathcal{A}^i_{uv}$ be the set of events that $u,v$ have exactly $i$ common neighbors. Then $\mathcal{A}_{uv}=\bigcup^2_{i=0}\mathcal{A}^i_{uv}$.
For a vertex $w$ of $V(G)-\{u,v\}$, since
$$Pr[w\mbox{ is a common neighbor of }u\mbox{ and }v]=\frac{1}{4}$$ and
$$Pr[w\mbox{ is not a common neighbor of }u\mbox{ and }v]=\frac{3}{4},$$
then $$Pr[\mathcal{A}^i_{uv}]={n-2\choose i}(\frac{1}{4})^{i}(\frac{3}{4})^{n-i-2}.$$
Therefore, $Pr[\mathcal{A}_{uv}]< 3n^2(\frac{3}{4})^{n-4}$ when $n$ is large enough, and then
$$Pr[\mathcal{A}]\leq {n\choose 2}Pr[\mathcal{A}_{uv}]< 3n^4 (\frac{3}{4})^{n-2}\rightarrow 0 \mbox{ as } n\rightarrow\infty.$$
This implies that almost all graphs have property that any two vertices have at least $3$ common neighbors.
We will complete the proof by showing that $md(G)=1$ if a graph $G$ has the property that every two vertices of $G$ have at least three common neighbors.

For any two edges $e=ab$ and $f=uv$ of $G$, there is a path $P$ of $G$ such that the pendent edges of $P$ are $e$ and $f$.
Let $e_1=x_1x_2$ and $e_2=x_2x_3$ be two adjacent edges of $P$. Then $x_1$ and $x_3$ have three common neighbors ($x_2$ is one of them) and
thus $e_1$ and $e_2$ are in a $K_{2,3}$ of $G$. This implies $e_1\theta e_2$. By transitivity, $e\theta f$. Therefore, $G$ is a closure,
and so $md(G)=1$ by Lemma \ref{eq-c-1}.
\end{proof}

\section{Nordhaus-Gaddum-type results}

For a graph parameter, it is always interesting to get the Nordhaus-Gaddum-type results, see \cite{AH} and \cite{Chang, HH, WU, Mao, NG, Shan, Zhang} for more such results on various
kinds of graph parameters. This section is devoted to get the Nordhaus-Gaddum-type results for our parameter $md(G)$.

For a connected graph $G$, a vertex $v$ is {\em deletable} if $G-v$ is connected. Let $\mathcal{B}$ be the set of blocks of $G$ and $S$ be the set of cut-vertices of $G$. A {\em block tree} of $G$ is a bipartite graph $B(G)$ with  bipartition $\mathcal{B}$ and $S$, and a block $B$ has an edge with a cut-vertex $v$ in $B(G)$ if and only if $B$ contains $v$. Therefore, every pendent vertex of $B(G)$ is a block (call it {\em leaf-block}).

Because $B(G)$ is a tree, there are at least two leaves in $B(G)$ if $G$ has more than one block.
For a leaf-block $B$ of $G$, there are $|B|-1$ deletable vertices in the block. This implies that every graph has at least two deletable vertices.

\begin{fact}\label{111}
If $G$ is a connected simple graph with $|G|\geq 2$, then $G$ has at least two deletable vertices. Furthermore, $G$ has exactly two deletable vertices if and only if $G$ is a path.
\end{fact}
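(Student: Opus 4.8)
The plan is to prove both statements by reasoning about the block tree $B(G)$. First I would dispose of the trivial case: if $G$ has exactly one block, then $G$ is either a single edge $K_2$ (two deletable vertices, and a path) or a $2$-connected graph on at least $3$ vertices, in which \emph{every} vertex is deletable, so there are at least $\lvert G\rvert \ge 3 > 2$ of them — consistent with the claim since such a $G$ is not a path. So assume $G$ has at least two blocks, whence $B(G)$ is a tree with at least two vertices and therefore has at least two leaves; each leaf of $B(G)$ is a leaf-block $B$ of $G$. The key observation, already noted in the excerpt, is that a leaf-block $B$ contributes $\lvert B\rvert - 1$ deletable vertices: precisely the non-cut-vertices of $B$, i.e. all vertices of $B$ except the unique cut-vertex of $G$ lying in $B$ (if $B$ is trivial, a cut-edge, this is the one endpoint of $B$ that is a leaf of $G$; if $B$ is a nontrivial $2$-connected block, it is all of $V(B)$ minus one vertex). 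Since distinct leaf-blocks of $G$ share no non-cut-vertex, picking one deletable vertex from each of two distinct leaf-blocks already yields two deletable vertices, establishing the first statement.

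For the ``furthermore'' part, the direction ``$G$ a path $\Rightarrow$ exactly two deletable vertices'' is immediate: in a path $P_n$ the only deletable vertices are the two endpoints, since deleting any internal vertex disconnects the path (and for $n = 2$ both vertices are deletable). For the converse I would argue contrapositively: suppose $G$ is not a path and count deletable vertices. If $G$ has a leaf-block $B$ that is nontrivial ($2$-connected, $\lvert B\rvert \ge 3$), then that block alone supplies $\lvert B\rvert - 1 \ge 2$ deletable vertices, and $B(G)$ has a second leaf supplying at least one more, giving at least three. If every leaf-block is trivial (a cut-edge) but $G$ is still not a path, then $B(G)$ must have a vertex of degree $\ge 3$ (a cut-vertex $v$ of $G$ lying in at least three blocks, or a block containing at least three cut-vertices); in either case $B(G)$, being a tree, has at least three leaves, hence $G$ has at least three leaf-blocks and therefore at least three deletable vertices. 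Finally, if $B(G)$ has maximum degree $2$, it is itself a path, and since all its leaf-blocks (and in fact all blocks) are cut-edges, $G$ is a path — contradicting our assumption. Thus a non-path has at least three deletable vertices, which is the contrapositive of the claim.

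The only mild subtlety — and the step I would be most careful about — is the case analysis showing that if $G$ is not a path then either some leaf-block is $2$-connected, or the block tree has a vertex of degree at least three. This amounts to checking that a connected graph whose block tree is a path and all of whose blocks are cut-edges is exactly a path graph; this is a routine induction on the number of blocks, peeling off a leaf-block (a pendant edge at an endpoint of the block tree) and applying the inductive hypothesis to the remaining graph, which again has a path block tree with all blocks trivial. I would include this induction explicitly, as it is the load-bearing combinatorial fact, but it presents no real difficulty.
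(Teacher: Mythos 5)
Your first statement and the direction ``path $\Rightarrow$ exactly two deletable vertices'' are fine, and your handling of the cases where some leaf-block is nontrivial or $B(G)$ has at least three leaves matches the paper. But the contrapositive argument has a genuine gap: the dichotomy you lean on --- ``if $G$ is not a path then either some leaf-block is $2$-connected or $B(G)$ has a vertex of degree at least three'' --- is false. Take a cycle with a pendant edge attached at each of two distinct vertices: every leaf-block is a cut-edge, the block tree is a path (maximum degree $2$), yet $G$ is not a path. Your closing step quietly upgrades the hypothesis ``all \emph{leaf}-blocks are cut-edges'' to ``all blocks are cut-edges'' (the parenthetical ``and in fact all blocks''), and the induction you propose establishes only the statement with the stronger hypothesis; it says nothing about a nontrivial \emph{internal} block, which is exactly the situation in the counterexample.

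The missing case is precisely the third case in the paper's proof: if $B(G)$ has exactly two leaves and both leaf-blocks are trivial, then, since $G$ is not a path, some internal block $B$ must be nontrivial. Because $B(G)$ is a path, $B$ contains exactly two cut-vertices of $G$, hence at least $|B|-2\ge 1$ vertices of $B$ are not cut-vertices; deleting such a vertex $v$ leaves $B-v$ connected (as $B$ is $2$-connected) with both cut-vertices of $B$ intact, so $v$ is deletable. Together with one deletable vertex from each of the two trivial leaf-blocks this yields at least three deletable vertices, which is what your case analysis needs to conclude the contrapositive.
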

\begin{proof}
We only need to deal with the case that $G$ is not a path. If $B(G)$ has at least three leaves, or $B(G)$ has two leaves with one being nontrivial, then $G$ has at least three deletable vertices; if $B(G)$ has exactly two trivial leaf-blocks, because $G$ is not a path, there is a nontrivial block $B$ and $B$ has exactly two cut vertices, then $B$ has at least $|B|-2\geq1$ deletable vertices.
Therefore $G$ has at least three deletable vertices.
\end{proof}

Because a complete graph $K_n$ can be decomposed into two connected graphs if and only if $n\geq4$, in this section we always assume $n\geq4$.

\begin{lemma} \label{dele}
Suppose $G$ and $\overline{G}$ are connected spanning subgraphs of $K_n$ with $n\geq 5$. Then there is a vertex $v$ of $K_n$ such that $v$ is deletable for both $G$ and $\overline{G}$.
\end{lemma}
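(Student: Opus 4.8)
The plan is to use the block-tree structure of both $G$ and $\overline{G}$ and count deletable vertices. By Fact \ref{111}, $G$ has at least two deletable vertices, and unless $G$ is a path it has at least three. The key observation is that a vertex $v$ fails to be deletable for $G$ precisely when $v$ is a cut-vertex of $G$; so I want to show that the set of cut-vertices of $G$ together with the set of cut-vertices of $\overline{G}$ cannot exhaust all $n$ vertices when $n\ge 5$.

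First I would handle the generic case where neither $G$ nor $\overline{G}$ is a path. Then by Fact \ref{111} each of $G$ and $\overline{G}$ has at least three deletable vertices, i.e., at most $n-3$ cut-vertices each. If the non-deletable sets were disjoint we would need $(n-3)+(n-3)\ge n$, which fails only for $n\le 5$; so for $n\ge 7$ we are immediately done, and I would check $n=5,6$ by a short direct argument. Actually a cleaner route: a leaf-block $B$ of $G$ contributes $|B|-1$ deletable vertices, all lying in a single block, hence these form a set that is "large" in $G$ but I then need to locate a vertex among them that is also deletable in $\overline{G}$, i.e., not a cut-vertex of $\overline{G}$. So the real strategy is: pick a leaf-block $B$ of $G$ (if $G$ has $\ge 2$ blocks) and show at least one vertex of $V(B)$ minus the unique cut-vertex of $B$ is deletable in $\overline{G}$; symmetrically if $\overline{G}$ has $\ge 2$ blocks. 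The hard case is when \emph{both} $G$ and $\overline{G}$ are $2$-connected, where "deletable" just means "non-cut-vertex", and every vertex of a $2$-connected graph is deletable — so in that case every vertex works and we are trivially done.

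So the argument organizes into cases by $2$-connectivity. Case 1: both $G$ and $\overline{G}$ are $2$-connected. Then every vertex is deletable in both, done. Case 2: exactly one of them, say $G$, has a cut-vertex. Then $\overline{G}$ is $2$-connected, so every vertex is deletable in $\overline{G}$; since $G$ has at least two deletable vertices by Fact \ref{111}, any of them works. Case 3: both $G$ and $\overline{G}$ have cut-vertices. This is the genuine case. Here I would use that $G$ has a leaf-block $B_1$ with $|B_1|-1$ deletable vertices (all in one block of $G$) and $\overline{G}$ has a leaf-block $B_2$ with $|B_2|-1$ deletable vertices. The set $D_1$ of deletable vertices of $G$ has size $\ge 2$, likewise $D_2$ for $\overline{G}$. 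If $|D_1|+|D_2|>n$ we are done by pigeonhole. So assume $|D_1|+|D_2|\le n$, which forces both $G$ and $\overline{G}$ to be paths (by the "exactly two deletable vertices iff path" half of Fact \ref{111}, giving $|D_i|=2$ only when $n=4$, contradicting $n\ge 5$; more generally $|D_i|\ge 3$ for non-paths so $|D_1|+|D_2|\ge n+ (6-n)$ which is $>n$ once... hmm, this needs $n<6$).

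Let me instead argue directly in Case 3 that we can avoid all cut-vertices. The number of cut-vertices of $G$ is at most the number of non-deletable vertices, and in a connected graph with a leaf-block $B_1$ of order $|B_1|$, we have at least $|B_1|-1\ge 1$ deletable vertices inside $B_1$; moreover the cut-vertices of $G$ all lie outside the "interiors" of leaf-blocks. The cleanest finish: let $v$ be a deletable vertex of $G$ that lies in a leaf-block $B_1$ of $G$ and is not the cut-vertex of $B_1$. Then $v$ has all its $G$-neighbors inside $B_1$, so in $\overline{G}$ the vertex $v$ is adjacent to every vertex outside $B_1$; since $|V(B_1)|\le n-1$ (as $G$ has $\ge 2$ blocks) there is at least one such vertex, and in fact $v$ is adjacent in $\overline G$ to all of $V(G)\setminus V(B_1)$, a set of size $\ge 1$. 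I then argue that such a high-$\overline G$-degree vertex cannot be a cut-vertex of $\overline G$ unless $\overline G$ is very small: if $\overline{G}-v$ were disconnected, one component $H$ of $\overline G - v$ lies entirely inside $V(B_1)\setminus\{v\}$ while all of $V(G)\setminus V(B_1)$ lies in another; but then in $G$, the set $V(G)\setminus V(B_1)$ is nonadjacent (in $\overline G$) to $H$... I'd need to pin down the contradiction using that $B_1$ is $2$-connected in $G$, hence $\overline G$ restricted to $V(B_1)$ avoids a near-complete graph, and push through the remaining bookkeeping. I expect this last step — ruling out that such a vertex $v$ is a cut-vertex of $\overline G$ — to be the main obstacle, and it is probably where the hypothesis $n\ge 5$ (rather than $n\ge 4$) is actually used, likely via a small-case check for $n\in\{5,6\}$ combined with the counting bound $|D_1|+|D_2|>n$ for larger $n$.
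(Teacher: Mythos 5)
Your Cases 1 and 2 are fine, but Case 3 -- the only substantive case, where both $G$ and $\overline{G}$ have cut vertices -- is not actually proved, and the ``cleanest finish'' you sketch there is false as stated. Take $n=5$ and let $G$ be the tree with edges $v_1v_2,\,v_2v_3,\,v_3v_4,\,v_3v_5$; then $\overline{G}$ is connected and both graphs have cut vertices. The vertex $v_1$ is deletable in $G$, lies in the leaf-block $G[\{v_1,v_2\}]$ and is not its cut vertex, and in $\overline{G}$ it is indeed adjacent to every vertex outside that block; nevertheless $v_1$ \emph{is} a cut vertex of $\overline{G}$, because $v_3$ is adjacent in $G$ to $v_2,v_4,v_5$, so its only $\overline{G}$-neighbor is $v_1$. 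The same example refutes the intermediate structural claim you lean on: the stranded component of $\overline{G}-v_1$ is $\{v_3\}$, which lies entirely \emph{outside} $B_1$ -- the fact that the vertices outside $B_1$ are all adjacent to $v$ in $\overline{G}$ says nothing about which components of $\overline{G}-v$ they fall into once $v$ is deleted. Your fallback, the counting bound $|D_1|+|D_2|>n$, is also not established (as you note, the ``at least three deletable vertices'' count only helps for $n<6$; even the true fact that a cut vertex of $G$ is never a cut vertex of $\overline{G}$ only gives $|D_1|+|D_2|\ge n$, and $P_4$ shows pigeonhole cannot be pushed further without using $n\ge 5$ structurally). So no route in the proposal closes Case 3.

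The repair is to choose the common deletable vertex adaptively rather than from an arbitrary leaf-block, which is what the paper does: take a cut vertex $v$ of $G$, let $S_1,\dots,S_r$ be the components of $G-v$ and $F_i=G[\{v\}\cup S_i]$, observe that $\overline{G}-v$ contains a spanning complete $r$-partite graph with parts $S_1,\dots,S_r$, and let $u_1\in S_1$ (say) be an $\overline{G}$-neighbor of $v$. If $r\ge 3$, or $r=2$ with $|S_2|\ge 2$, a deletable vertex of $F_2$ other than $v$ (Fact \ref{111}) is deletable in both graphs; if $r=2$ and $|S_2|=1$, one takes a deletable vertex of $F_1$ avoiding both $v$ and $u_1$ when $F_1$ is not a path, and the endpoint of $F_1$ far from $v$ when $F_1$ is a path -- this last subcase is exactly where $n\ge 5$ is used. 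In the counterexample above this procedure selects $v_4$ or $v_5$, not $v_1$, which is why the lemma survives while your fixed choice does not.
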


\begin{proof}
If both $G$ and $\overline{G}$ are $2$-connected spanning subgraphs of $K_n$, then every vertex is deletable for both $G$ and $\overline{G}$. So, we assume that at least one of $G_1=G$ and $G_2=\overline{G}$ has
cut vertices. Let $v$ be a cut vertex of $G_1$ and let $S_1,\cdots,S_r$ be the components of $G_1-v$. Then $F_i=G_1[v\cup S_i]$ is a connected graph. It is obvious that $G_2-v$ contains a
complete $r$-partite spanning subgraph, denote it by $H$. W.l.o.g., let $e=vu_1$ be an edge of $G_2$ and $u_1\in S_1$. We distinguish the following cases to discuss.

{\em Case 1}: $r=2$ and $|S_2|\geq2$, or $r\geq3$.

By Fact \ref{111}, there is a deletable vertex $u_2$ of $F_2$ and $u_2\neq v$. Then $G_1-u_2$ is connected. If $r=2$ and $|S_2|\geq2$, because $H$ is a complete bipartite graph with $|S_2|\geq2$, and $vu_1$ is an edge of $G_2$ with $u_1\in S_1$, then $G_2-u_2$ is connected; if $r\geq3$, then $G_2-u_2$ is also connected. Therefore, $u_2$ is deletable for both $G$ and $\overline{G}$.

{\em Case 2}: $r=2$ and $|S_2|=1$. Let $S_2=\{u_2\}$.

If $F_1$ is not a path, by Fact \ref{111}, $F_1$ has a deletable vertex $w$ different from $v$ and $u_1$. Then $G_1-w$ is connected. Because $u_2$ connects to all vertices of $S_1$ and $vu_1$ is not affected in $G_2-w$, then $G_2-w$ is also connected. Therefore, $w$ is deletable for both $G$ and $\overline{G}$.

If $F_1$ is a path, then suppose $y$ is a leaf of $F_1$ other than $v$ and $x$ connects $y$ in $F_1$. Because $n\geq5$, $v$ is not connected to both $x$ and $y$ in $F_1$. Therefore, $vx$ and $vy$ are edges of $G_2$, both $G_1-y$ and $G_2-y$ are connected. Therefore, $y$ is deletable for both $G$ and $\overline{G}$.
\end{proof}

\begin{theorem}\label{N-G-T}
Suppose $G$ and $\overline{G}$ are connected spanning subgraphs of $K_n$. Then $md(G)+md(\overline{G})\leq n+1$ for $n\geq5$, and $md(G)+md(\overline{G})\geq2$ for $n\geq8$. Furthermore, the upper bound and the lower bound are sharp.
\end{theorem}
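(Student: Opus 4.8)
The plan is to prove the upper bound $md(G)+md(\overline{G})\le n+1$ by induction on $n$, using Lemma \ref{dele} to find a common deletable vertex, and then to handle the lower bound together with sharpness by exhibiting explicit graphs.

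\emph{Upper bound.} First I would establish the base case $n=5$ by a finite (but careful) case analysis: among the connected graphs on $5$ vertices whose complement is also connected, one checks that $md(G)+md(\overline{G})\le 6$; the extremal configurations come from $G$ a path (so $md(G)=4$) and $\overline{G}$ then having small $md$. For the inductive step with $n\ge 6$, by Lemma \ref{dele} there is a vertex $v$ deletable for both $G$ and $\overline{G}$, i.e.\ both $G-v$ and $\overline{G}-v=\overline{G-v}$ are connected. If $v$ is neither a pendent vertex nor a cut-vertex of $G$, then Lemma \ref{secq} gives $md(G)\le md(G-v)$; the trouble is that $v$ may well be a pendent or cut-vertex of $G$ (and simultaneously something analogous fails on the $\overline{G}$ side only if $v$ is a cut-vertex there, which Lemma \ref{dele} rules out). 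So the real work is to show that, in every case, the \emph{combined} loss is controlled: roughly, $md(G)+md(\overline G)\le \big(md(G-v)+md(\overline{G}-v)\big)+2$, after which the induction hypothesis $md(G-v)+md(\overline{G}-v)\le (n-1)+1=n$ finishes it.

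\emph{Accounting for the removed vertex.} To get the "$+2$" I would argue as follows. On the $\overline{G}$ side, $v$ is deletable and not a cut-vertex, so if $v$ is also not pendent in $\overline{G}$ we get $md(\overline{G})\le md(\overline{G}-v)$ for free; if $v$ is pendent in $\overline{G}$ then $d_G(v)=n-2$, and one shows directly (the edges at $v$ in $G$, being $n-2\ge 4$ of them, each lie in a triangle, so a closure-type argument forces them one color) that the edges incident to $v$ in $G$ contribute at most one new color, while $md(\overline{G})=md(\overline{G}-v)+\text{(one trivial block)}=md(\overline{G}-v)+1$. The symmetric analysis applies on the $G$ side. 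Combining the per-side bounds—at most one extra color from $v$'s star in $G$ plus at most one extra from $v$'s star in $\overline G$, never both sides being "bad" in the uncontrolled (cut-vertex) way because Lemma \ref{dele} gives deletability on both—yields the claimed "$+2$". The main obstacle is exactly this case bookkeeping: carefully pinning down, when $v$ is a cut-vertex of $G$, how $md(G)$ relates to $md(G-v)$ together with the $md$ of the block(s) of $G$ at $v$, and checking no case exceeds the budget.

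\emph{Sharpness and lower bound.} For sharpness of the upper bound, I would take $G$ to be a path $P_n$ (so $md(G)=n-1$) and check that its complement $\overline{P_n}$, which is $2$-connected for $n\ge 5$ and in fact close to being a dense "nice" graph, satisfies $md(\overline{P_n})=2$, giving the total $n+1$; the value $2$ would be obtained by an explicit $2$-coloring plus Theorem \ref{n-2} or a direct argument that $3$ colors are impossible. For the lower bound $md(G)+md(\overline{G})\ge 2$ when $n\ge 8$: since $md\ge 1$ always, the only way to fail is $md(G)=md(\overline{G})=1$, and I would show that for $n\ge 8$ at least one of $G,\overline{G}$ is "rich enough" (e.g.\ contains a vertex whose two sides force a $C_4$ colored alternately, or contains an induced structure forcing $md\ge 2$ such as a spanning unicyclic subgraph with a short cycle) to make $md\ge 2$; a Ramsey-type observation guarantees one of $G,\overline G$ contains a suitable dense piece once $n$ is large. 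Sharpness of the lower bound is then witnessed by a pair with both $md$ equal to $1$ for small $n$ (explaining the threshold $n\ge 8$), or by noting the bound $2$ is attained with equality by some explicit pair.
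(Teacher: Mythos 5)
Your skeleton for the upper bound (induction on $n$, a common deletable vertex from Lemma \ref{dele}, then Lemma \ref{secq}) is the right one, but your bookkeeping does not prove the theorem: a budget of ``$+2$'' combined with the induction hypothesis $md(G-v)+md(\overline{G}-v)\le n$ only yields $md(G)+md(\overline{G})\le n+2$, not $n+1$. The correct accounting is ``$+1$'': since $d_G(v)+d_{\overline{G}}(v)=n-1\ge 5$, the vertex $v$ can be pendent in at most one of $G,\overline{G}$; on any side where $d(v)\ge 2$, deletability (Lemma \ref{dele} already rules out $v$ being a cut vertex of either graph, so your worry about that case is unfounded) lets Lemma \ref{secq} give $md\le md(\cdot-v)$ with \emph{no} new color, and on a pendent side Proposition \ref{block} gives exactly one new color. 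Hence at most one extra color in total, which is what closes the induction. Your sharpness witness is also wrong for $n\ge 6$: $md(\overline{P_n})=1$ there (the vertex of degree $n-3$ in $\overline{P_n}$ is deletable and non-pendent, and after deleting it one gets a graph of the form $H\vee v$, so Lemma \ref{secq} and Theorem \ref{MDC-special}(1) force $md=1$), so $P_n$ gives only $n$, not $n+1$. A correct extremal pair is a tree $B_n$ with $\Delta(B_n)=n-2$, whose complement is $K_{n-1}^-$ with a pendant edge and has $md=2$.

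The lower-bound part of your plan is confused. The inequality $md(G)+md(\overline{G})\ge 2$ is immediate because every connected graph has $md\ge 1$; the case $md(G)=md(\overline{G})=1$ does not ``fail'' the bound, it attains it. What actually requires work is sharpness for $n\ge 8$, i.e.\ exhibiting, for each such $n$, a graph $G$ with $md(G)=md(\overline{G})=1$; your proposed route of showing that one of $G,\overline{G}$ must have $md\ge 2$ for large $n$ is false and would contradict the very sharpness you are asked to establish. You give no construction; one is needed, e.g.\ take a complete bipartite graph with parts $A\cup\{a,u\}$ and $B\cup\{b,v\}$ ($|A|,|B|\ge 2$) and delete the $4$-cycle on $\{a,b,u,v\}$: then $G$ is a union of $K_{2,s}$-type pieces sharing edges, so $md(G)=1$ by Theorem \ref{MDC-special}(2) and Lemma \ref{subb}, and $\overline{G}$ is covered by overlapping complete graphs, so $md(\overline{G})=1$ as well.
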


\begin{proof}
Because both $G$ and $\overline{G}$ are non-empty graphs, then $md(G)+md(\overline{G})\geq2$ is obvious for $n\geq8$. So, we need to show that $md(G)+md(\overline{G})\leq n+1$ for $n\geq5$.

If $n=5$, there are five cases to consider for the graphs $G$ and $\overline{G}$, and all of the five cases imply that $md(G)+md(\overline{G})\leq6= n+1$ (see Figure \ref{ngt-2}).

We proceed by induction on $n$. The theorem holds for $n=5$. If $n>5$, by Lemma \ref{dele} there is a deletable vertex $v$ for both $G$ and $\overline{G}$. Let $G'=G-v$.
Then $G'$ and $\overline{G'}$ are connected subgraphs of $K_{n-1}$. By induction, $md(G')+md(\overline{G'})\leq n$. Let $\Gamma$ be an extremal $MD$-coloring of $G$.

Because $n> 5$, at least one of $d_G(v)$ and $d_{\overline{G}}(v)$ is greater than 1 (say $d_G(v)=r \geq2$). Then $v$ is neither a pendent vertex nor a cut vertex of $G$, and so by Lemma \ref{secq}, $md(G)\leq md(G')$.
If $d_{\overline{G}}(v)\geq2$, we also have $md(\overline{G})\leq md(\overline{G'})$; if $d_{\overline{G}}(v)=1$, then $md(\overline{G})=md(\overline{G'})+1$. Therefore,
$md(G)+md(\overline{G})\leq md(G')+md(\overline{G'})+1\leq n+1$.

Now we show that the upper bound is sharp for $n\geq5$. Let $B_n$ be a tree with $|B_n|=n$ and $\Delta(B_n)=n-2$. Then $\overline{B_n}$ is a graph obtained by joining a pendent edge to one of the vertices of $K_{n-1}^-$ with minimum degree. Since $G$ and $\overline{G}$ are connected graphs and $md(G)=n-1,md(\overline{G})=2$, then $md(B_n)+md(\overline{B_n})=n+1$.

We now show that the lower bound is sharp for $n\geq8$. Let $V(K_n)=A\cup B\cup\{a,b,u,v\}$ where both $|A|,|B|$ are greater than 1. Let $J$ be a complete bipartite graph with bipartition $A\cup\{a,u\}$ and $B\cup\{b,v\}$. Then $C=J[a,b,u,v]$ is a $C_4$. Let $G$ be a graph obtained from $J$ by deleting the edges of $C$. Let $G_a=J-\{b,u,v\}$, $G_b=J-\{a,u,v\}$, $G_u=J-\{a,b,v\}$ and $G_v=J-\{a,b,u\}$. Then $G$ is the union of $G_a,G_b,G_u$ and $G_v$.
Because $G_a,G_b,G_u$ and $G_v$ are complete bipartite graphs other than $K_{2,2}$ and stars, by Theorem \ref{MDC-special} $(2)$, we have $md(G_a)=md(G_b)=md(G_u)=md(G_v)=1$. Thus, by Lemma \ref{subb}, $md(G)=1$ (see Figure \ref{ngt-l}). For $\overline{G}$, since $H_1=\overline{G}[A\cup a\cup u]$, $H_2=\overline{G}[a,b,u,v]$ and $H_3=\overline{G}[B\cup b\cup v]$ are complete graphs, and $E(H_i)\cap E(H_{i+1})\neq \emptyset$ for $i=1,2$, $md(\overline{G})=1$ (see Figure \ref{ngt-l}). Therefore, the lower bound is sharp for $n\geq8$.
\end{proof}

\begin{figure}[h]
    \centering
    \includegraphics[width=260pt]{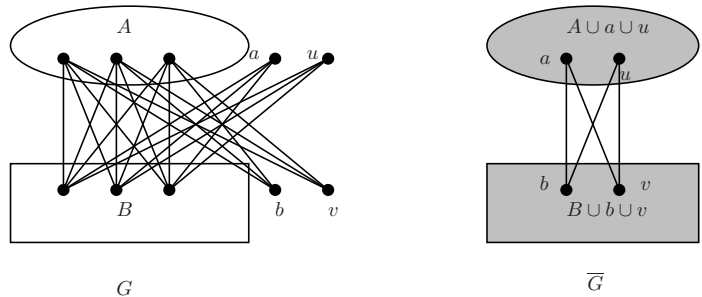}\\
    \caption{Extremal graphs for $md(G)+md(\overline{G})=2$ when $|G|\geq 5$.} \label{ngt-l}
\end{figure}

{\bf Remark 2:} By Theorem \ref{N-G-T}, the lower bound of $md(G)+md(\overline{G})$ for $4\leq n\leq7$ and the upper bound of $md(G)+md(\overline{G})$ for $n=4$ are not considered. We will discuss them below.

{\bf({\uppercase\expandafter{\romannumeral1}}) \ }  For $n=4$, because $K_4$ can only be decomposed into two $P_3$, then $md(G)+md(\overline{G})=6= n+2$.

\begin{figure}[h]
    \centering
    \includegraphics[width=260pt]{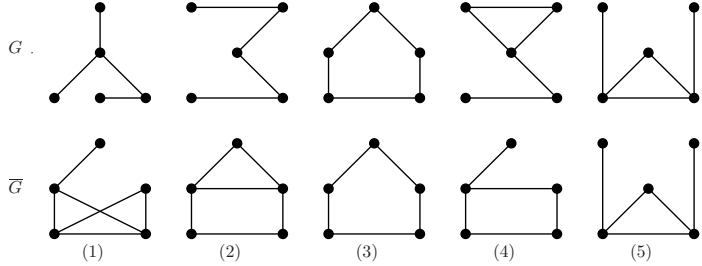}\\
    \caption{The five cases of $G$ and $\overline{G}$ we talk about when $n=5$.} \label{ngt-2}
\end{figure}

{\bf({\uppercase\expandafter{\romannumeral2}}) \ }  For $n=5$, there are ten cases for $G$ and $\overline{G}$. However, by symmetry, we only need to discuss the five cases depicted in Figure \ref{ngt-2}. Among all the five cases, $(3)$ implies that the lower bound of $md(G)+md(\overline{G})$ is $4$.

{\bf({\uppercase\expandafter{\romannumeral3}})\ } For $n=6$, $e(K_6)=15$. Because $G$ and $\overline{G}$ are connected spanning subgraphs of $K_6$, both $e(G)$ and $e(\overline{G})$ are greater than or equal to $5$.

If $e(G)=5$ and $e(\overline{G})=10$, then $md(G)+md(\overline{G})\geq6$.

If $e(G)=6$ and $e(\overline{G})=9$, then $G$ is a unicycle graph and the length of the cycle is at most $6$. By Proposition \ref{P1}, we have $md(G)\geq 3$. So, $md(G)+md(\overline{G})\geq4$.

If $e(G)=7$ and $e(\overline{G})=8$, we assume that $G$ has $t$ blocks. If $t\geq3$, by proposition \ref{block} we have $md(G)\geq3$. Thus, $md(G)+md(\overline{G})\geq4$.
\begin{figure}[h]
    \centering
    \includegraphics[width=250pt]{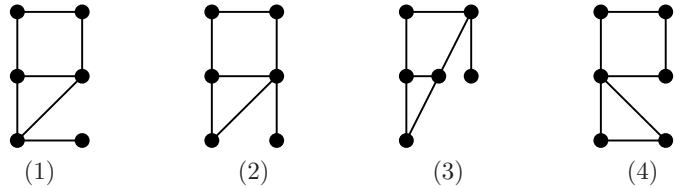}\\
    \caption{The four cases of graph $G$ when $t=2$.} \label{ngt-31}
\end{figure}
If $t=2$, $G$ is isomorphic to one of the four graphs in Figure \ref{ngt-31}. Because every graph $F$ in Figure \ref{ngt-31} has $md(F)=3$, then $md(G)+md(\overline{G})\geq4$.
If $t=1$, there are three cases to consider (see Figure \ref{ngt-32}). As shown in Figure \ref{ngt-32}, we give an extremal $MD$-coloring for each graph. Because the last two cases of Figure \ref{ngt-32} imply that $md(G)+md(\overline{G})=4$, the lower bound of $md(G)+md(\overline{G})$ is $4$.

\begin{figure}[h]
    \centering
    \includegraphics[width=250pt]{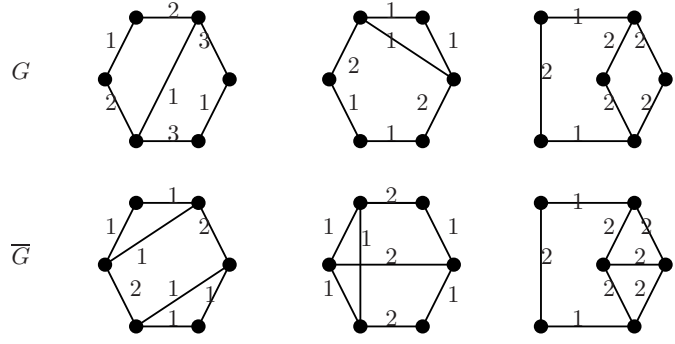}\\
    \caption{The three cases of $G$ and $\overline{G}$ when $t=1$.} \label{ngt-32}
\end{figure}

{\bf({\uppercase\expandafter{\romannumeral4}}) \ } For $n=7$, the lower bound of $md(G)+md(\overline{G})$ is $2$. In fact, we only need to construct a graph $G$ (see in Figure \ref{ngt-4} $(1)$) and $\overline{G}$ (see in Figure \ref{ngt-4} $(2)$) such that $md(G)=md(\overline{G})=1$.

Let $\Gamma$ be an extremal $MD$-coloring of $G$. Because $G_1=G[g,f,c,d,h]$ and $G_2=G[g,f,c,d,b]$ are isomorphic to $K_{2,3}$, and because $G_1$ and $G_2$ have common edges, then by Lemma \ref{subb} we have $md(G_1\cup G_2)=1$. Because $a$ is neither a cut vertex nor a pendent vertex of $G$, then $md(G)\leq md(G-a)=1$.

\begin{figure}[h]
    \centering
    \includegraphics[width=280pt]{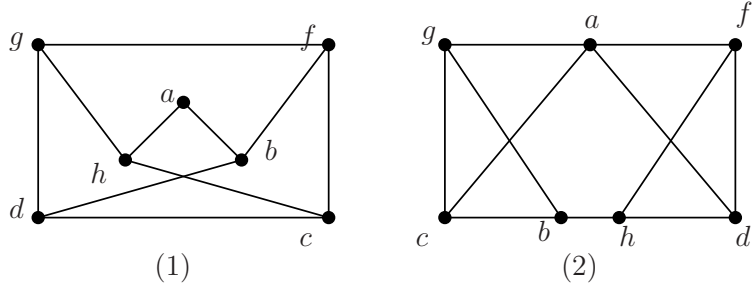}\\
    \caption{Extremal graphs for $n=7$.} \label{ngt-4}
\end{figure}

Because $\overline{G}[g,b,c,a]$ and $\overline{G}[a,f,h,d]$ are isomorphic to $K_4^-$, all edges of $\overline{G}[g,b,c,a]$ ($\overline{G}[a,f,h,d]$) are colored the same under any $MD$-coloring of $\overline{G}$. Then the $5$-cycle $\overline{G}[g,a,f,h,b]$ just has one trivial $MD$-coloring. Therefore, $md(\overline{G})=1$.
 $\blacksquare$

For ease of reading, the lower bounds and upper bounds of $md(G)+md(\overline{G})$ for $n\geq4$ are summarized in the following table.

\begin{table}[!htbp]
\centering
\begin{tabular}{|c|c|c|c|c|}
\hline
  & $n=4$  & $n=5$ & $n=6$& $n\geq7$ \\
 \hline
 Lower bound &$6$ & $4$&$4$&$2$ \\
 \hline
 Upper bound & $6$ & $6$ &$7$& $n+1$ \\
 \hline
\end{tabular}
\caption{The bounds of $md(G)+md(\overline{G}$).}
\end{table}

\begin{theorem}
If both $G$ and $\overline{G}$ are connected and $|G|=n\geq4$, then $md(G)\cdot md(\overline{G})=9$ for $n=4$; $4\leq md(G)\cdot md(\overline{G})\leq9$ for $n=5$;
$3\leq md(G)\cdot md(\overline{G})=10$ for $n=6$ and $1\leq md(G)\cdot md(\overline{G})\leq2(n-1)$ for $n\geq7$. Furthermore, the bounds are sharp.
\end{theorem}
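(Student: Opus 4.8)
The plan is to treat the multiplicative Nordhaus–Gaddum bounds as a corollary of the additive ones from Theorem \ref{N-G-T} together with the small-order case analysis recorded in Remark 2. The single unifying observation is that, subject to a fixed sum $s=md(G)+md(\overline G)$, the product $md(G)\cdot md(\overline G)$ is maximized when the two factors are as far apart as possible and minimized when they are as balanced as possible; but the range of each factor is itself constrained: $1\le md(G)\le n-1$ by the Corollary after Lemma \ref{sub}, and $md(G)=n-1$ forces $G$ to be a tree (hence $\overline G$ determined up to the family $B_n,\overline{B_n}$ used for sharpness in Theorem \ref{N-G-T}). So the argument is: first pin down, for each of $n=4,5,6$, the exact set of attainable pairs $(md(G),md(\overline G))$ using the case pictures already drawn (Figures \ref{ngt-2}, \ref{ngt-31}, \ref{ngt-32}), and read off the product bounds directly; then for $n\ge 7$ combine the additive upper bound $md(G)+md(\overline G)\le n+1$ with $md(G),md(\overline G)\ge 1$ to get the product upper bound, and use the lower-bound construction to get the product lower bound.

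For the upper bound when $n\ge 7$: from $md(G)+md(\overline G)\le n+1$ and both terms $\ge 1$, write $md(\overline G)=:k$, so $md(G)\le n+1-k$ and the product is at most $k(n+1-k)$, which over the integer range $1\le k\le n$ is not what we want — instead I want to exploit that if $md(G)=n-1$ then $G$ is a tree and $\overline G$ is (for the extremal tree $B_n$) exactly the graph with $md(\overline G)=2$, giving product $2(n-1)$; and if $md(G)\le n-2$ then by the additive bound $md(\overline G)\le n+1-md(G)$, and one checks $md(G)\cdot md(\overline G)\le (n-2)\cdot 3$ only in the boundary configurations — more carefully, the product $xy$ subject to $x+y\le n+1$, $1\le x\le n-1$, $1\le y\le n-1$, and the extra combinatorial constraint that $x=n-1$ or $y=n-1$ can hold only for a tree, is maximized at $(x,y)=(n-1,2)$ giving $2(n-1)$, since any interior pair $(x,y)$ with $x,y\le n-2$ and $x+y\le n+1$ satisfies $xy\le \big((n+1)/2\big)^2 < 2(n-1)$ for $n\ge 7$ (this last inequality is a routine check). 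For the lower bound when $n\ge 7$, the graph $G$ and its complement constructed in the proof of Theorem \ref{N-G-T} (the one in Figure \ref{ngt-l}) already have $md(G)=md(\overline G)=1$, giving product $1$, and this is attained for every $n\ge 8$; for $n=7$ the pair in Figure \ref{ngt-4} with $md(G)=md(\overline G)=1$ does the job, so the lower bound $1$ is sharp for all $n\ge 7$.

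For $n=4$: by part (I) of Remark 2, $K_4$ decomposes only as two copies of $P_3$, so $md(G)=md(\overline G)=3$ forcibly, whence $md(G)\cdot md(\overline G)=9$ — an equality, not a pair of bounds. For $n=5$: by part (II) of Remark 2 the five cases of Figure \ref{ngt-2} realize $md(G)+md(\overline G)$ between $4$ and $6$; inspecting the factor pairs in those five cases, the product ranges over $\{4,\ldots,9\}$ with the minimum $4$ coming from case (3) and the maximum $9$ from the configuration where one side is a tree ($md=4$) — wait, $md$ of a tree on $5$ vertices is $4$, and then the complement would need $md\ge 1$; I should double check which case of Figure \ref{ngt-2} attains product $9$, but morally it is the most unbalanced split, and sharpness of both ends follows from the explicit colorings already given there. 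For $n=6$: by part (III) of Remark 2, $md(G)+md(\overline G)\ge 4$ always and, tracing the case analysis ($t\ge 3$, $t=2$ via Figure \ref{ngt-31}, $t=1$ via Figure \ref{ngt-32}, plus the edge-count cases), the minimum product is $3$ (attained when $e(G)=6$, $G$ a unicyclic graph with $md(G)=3$ and $\overline G$ connected with $md(\overline G)=1$), while the maximum is $10$ (a tree on $6$ vertices has $md=5$, its complement as in Theorem \ref{N-G-T} has $md=2$, product $10$).

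\textbf{Main obstacle.} The genuinely delicate point is the small-order bookkeeping for $n=5$ and $n=6$: one must verify that the extreme \emph{products}, not merely the extreme \emph{sums}, are actually realized, and in particular that no decomposition of $K_5$ or $K_6$ yields a product outside the claimed interval — the product can behave non-monotonically across the finitely many cases, so each of the graphs drawn in Figures \ref{ngt-2}, \ref{ngt-31}, \ref{ngt-32} (and their complements) has to be checked individually using Propositions \ref{block}, \ref{P1} and Theorem \ref{MDC-special}. The $n\ge 7$ range, by contrast, is essentially immediate once the additive Theorem \ref{N-G-T} is in hand, modulo the elementary inequality $\big((n+1)/2\big)^2<2(n-1)$ for $n\ge 7$ which guarantees the unbalanced tree/complement pair is the unique product-maximizer.
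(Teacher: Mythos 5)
There is a genuine gap in your argument for the upper bound when $n\geq 6$, and it is fatal to the whole plan of deducing the multiplicative bound from the additive one. Your "routine check" inequality $\bigl(\frac{n+1}{2}\bigr)^2<2(n-1)$ is false for every $n$: it is equivalent to $(n+1)^2<8(n-1)$, i.e.\ $(n-3)^2<0$. More fundamentally, the constraint $x+y\leq n+1$ together with $1\leq x,y\leq n-1$ simply does not force $xy\leq 2(n-1)$, even after you exclude the tree case $x=n-1$: the pair $(x,y)=(n-2,3)$ is additively feasible and has product $3(n-2)>2(n-1)$ for all $n\geq 5$, and near-balanced pairs give products of order $n^2/4$. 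So to prove $md(G)\cdot md(\overline G)\leq 2(n-1)$ you must rule out such pairs structurally, not arithmetically; knowing only Theorem \ref{N-G-T} cannot do it. The paper's proof is genuinely different at exactly this point: it inducts on $n$ using Lemma \ref{dele} to find a vertex $v$ deletable in both $G$ and $\overline G$. If $d_G(v)\geq 2$ and $d_{\overline G}(v)\geq 2$, Lemma \ref{secq} gives $md(G)\cdot md(\overline G)\leq md(G-v)\cdot md(\overline G-v)\leq 2(n-2)$ by induction; if instead $d_G(v)=1$ (so $d_{\overline G}(v)=n-2$), it proves directly that $md(\overline G)\leq 2$, by analyzing the components of $\overline G-\{u,v\}$ (where $u$ is $v$'s neighbour in $G$) via the join construction of Theorem \ref{MDC-special}(1), Lemma \ref{subb}, and a $K_{2,3}$ argument with Claim \ref{G-v}; combined with $md(G)\leq n-1$ this yields $2(n-1)$. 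That second case is also what handles \emph{all} trees $G$, not just $B_n$ — your remark that $md(G)=n-1$ pins down $\overline G$ "up to the family $B_n,\overline{B_n}$" is not correct, since different trees have different complements and you would still need $md(\overline T)\leq 2$ for every tree $T$ with connected complement.

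The remaining parts of your proposal are closer to the paper but still loose. For $n=5$ the deduction product $\leq 9$ from sum $\leq 6$ is exactly the paper's step, but the maximum $9$ is attained by the balanced pair $(3,3)$ of Figure \ref{ngt-2}(4), not by an unbalanced tree split (a tree on $5$ vertices gives at most $4\cdot 2=8$), so your hedged guess there points the wrong way. For $n=6$ the upper bound $10$ is not read off from Remark 2 either; it is the base case of the same induction (Case 1 gives $\leq 9$, Case 2 gives $\leq 2(n-1)=10$). Your lower-bound discussion ($n=4$ forced to $9$; $n=5$ minimum $4$ from Figure \ref{ngt-2}(3); $n=6$ minimum $3$ via a unicyclic $G$ with $md(G)=3$ and $md(\overline G)=1$; $n\geq 7$ product $1$ from the constructions of Theorem \ref{N-G-T} and Figure \ref{ngt-4}) does match the paper's treatment.
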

\begin{proof}
We first show the upper bounds.

If $n=4$, then $G=\overline{G}=P_3$, and so $md(G)\cdot md(\overline{G})=9$; if $n=5$, then because $md(G)+md(\overline{G})=6$, we have $md(G)\cdot md(\overline{G})\leq9$.
The graphs $G$ and $\overline{G}$ are shown in Figure \ref{ngt-2} $(4)$, implying that $md(G)\cdot md(\overline{G})=9$.

We will show the upper bounds for $n\geq6$. The proof proceeds by induction on $n$. We will show the inductive base $n=6$ and the inductive step $n>6$ simultaneously.
Let $G$ and $\overline{G}$ be connected graphs with $n\geq6$. By Lemma \ref{dele}, there is a vertex $v$ such that both $G-v$ and $\overline{G}-v$ are connected.

{\em Case 1.} \ $d_G(v)\geq2$ and $d_{\overline{G}}(v)\geq2$.

Then $v$ is not a pendent vertex or a cut vertex of $G$ and $\overline{G}$. By Lemma \ref{secq}, $md(G)\leq md(G-v)$ and $md(\overline{G})\leq md(\overline{G}-v)$. Therefore, $md(G)\cdot md(\overline{G})\leq md(G-v)\cdot md(\overline{G}-v)$. If $n=6$, $md(G)\cdot md(\overline{G})\leq 9< 2(n-1)$; if $n>6$, by induction on $n$, $md(G)\cdot md(\overline{G})\leq md(G-v)\cdot md(\overline{G}-v)\leq 2(n-2)<2(n-1)$.

{\em Case 2.} \ $d_G(v)=1$ and $d_{\overline{G}}(v)=n-2$.

Let $u$ be the neighbor of $v$ in $G$. Then, $v$ connects every vertex of $V(\overline{G})-\{u,v\}$ in $\overline{G}$.

If $u$ is not a cut-vertex of $\overline{G}-v$, then $\overline{G}-u=v\vee (\overline{G}-\{u,v\})$ and thus $md(\overline{G}-u)=1$. Therefore, $md(\overline{G})=1$.

If $\overline{G}-\{u,v\}$ has two components $S_1$ and $S_2$, then $\overline{G}-u=(v\vee S_1)\cup (v\vee S_2)$. Since $md(v\vee S_1)=md(v\vee S_2)=1$, then $md(\overline{G}-u)=2$. Since $u$ is not a pendent vertex or cut vertex of $\overline{G}$, by Lemma \ref{secq}, $md(\overline{G})\leq md(\overline{G}-u)\leq2$.

If $\overline{G}-\{u,v\}$ has components $S_1,\cdots,S_k$ where $k\geq 3$, then let $w_i$ be a vertex connects $u$ in $S_i$ for $i\in[k]$.
Then $md(v\vee S_i)=1$ for $i\in[k]$. We now show $md(\overline{G})=1$. Otherwise, there is an $MD$-coloring $\Gamma$ of $G$ with $|\Gamma(G)|\geq2$.
Since $u$ is not a pendent vertex or a cut vertex of $\overline{G}$, by Claim \ref{G-v}, $\Gamma(\overline{G}-u)=\Gamma(\overline{G})$. Then there are two edges $e_1$ and $e_2$ of $\overline{G}-u$ such that $\Gamma(e_1)\neq\Gamma(e_2)$.
Since $md(v\vee S_i)=1$ for $i\in[k]$, w.l.o.g., let $e_1=vw_1$ and $e_2=vw_2$, then $G[u,v,w_1,w_2,w_3]\cong K_{2,3}$. This contradicts that $\Gamma$ is an $MD$-coloring restricted on the subgraph $G[u,v,w_1,w_2,w_3]$. Therefore,  $md(\overline{G})=1$.

According to the above, $md(\overline{G})\leq2$. Since $md(G)\leq n-1$, then $md(G)\cdot md(\overline{G})\leq 2(n-1)$ for $n\geq6$.

The graphs $B_n$ and $\overline{B_n}$ defined in the proof of Theorem \ref{N-G-T} show that $md(B_n)\cdot md(\overline{B_n})=2(n-1)$. So, the upper bound is sharp for $n\geq6$.

Now we show the lower bounds.

If $n=4$, $md(G)\cdot md(\overline{G})=9$; if $n\geq7$, since there are graphs $G$ and $\overline{G}$ such that $md(G)+md(\overline{G})=2$, then $md(G)\cdot md(\overline{G})=1$, i.e., the lower bound is sharp.

If $n=5$, $md(G)\cdot md(\overline{G})$ is minimum when $G$ and $\overline{G}$ are graphs shown in Figure \ref{ngt-2} $(3)$, which implies that $md(G)\cdot md(\overline{G})=4$.

If $n=6$, since $md(G)+md(\overline{G})\geq4$, $md(G)\cdot md(\overline{G})\geq3$. Let $G$ be a graph obtained by connecting an additional vertex $w$ to a vertex $u$ of a $5$-cycle (which implies $md(G)=3$). Then $\overline{G}$ is a graph obtained by connecting $w$ to every vertex of $\overline{C_5}$ except for $u$. Then $u$ is neither a pendent vertex nor a cut vertex of $\overline{G}$, $md(\overline{G})\leq md(\overline{G}-u)$. Since $\overline{G}-\{w,u\}$ is a path and $\overline{G}-u=v\vee(\overline{G}-\{w,u\})$, then $md(\overline{G}-u)=1$.  Therefore, $md(\overline{G})=1$, the lower bound is sharp for $n=6$.
\end{proof}
For ease of reading, the lower bounds and upper bounds of $md(G)\cdot md(\overline{G})$ for $n\geq4$ are summarized in the following table.
\begin{table}[!htbp]
\centering
\begin{tabular}{|c|c|c|c|c|}
\hline
  & $n=4$  & $n=5$ & $n=6$& $n\geq7$ \\
 \hline
 Lower bound &$9$ & $4$&$3$&$1$ \\
 \hline
 Upper bound & $9$ & $9$ &$10$& $2(n-1)$ \\
 \hline
\end{tabular}
\caption{The bounds of $md(G)\cdot md(\overline{G}$).}
\end{table}

\end{document}